\theoremstyle{definition}
\newtheorem{df}{Definition} [section]
\theoremstyle{plain}
\newtheorem{thm}[df]{Theorem}
\newtheorem{lemma}[df]{Lemma}
\newtheorem{claim}[df]{Claim}
\newtheorem{conj}[df]{Conjecture}
\newtheorem{fact}[df]{Fact}
\title{Outdegree conditions forcing short cycles in digraphs}
\author{Dan Ismailescu}
\address{Mathematics Department, Hofstra University, Hempstead, NY 11549, USA.}
\email{dan.p.ismailescu@hofstra.edu}
\author{Joonsoo Lee}
\address{Dwight-Englewood High School, 315 E Palisade Ave, Englewood, NJ 07631, USA.}
\email{jlee20@d-e.org}
\author{Andrew Yang}
\address{The Hotchkiss School, 11 Interlaken Rd, Lakeville, CT 06039, USA.}
\email{ayang21@hotchkiss.org}
\begin{document}

\begin{abstract}
Given a positive integer $m\ge 3$, let $ch(m)$ be the smallest positive constant with the following property:

\emph{ Every simple directed graph on $n\ge 3$ vertices all whose outdegrees are at least $ch(m)\cdot n$ contains a directed cycle of length at most $m$.}

Caccetta and H\"{a}ggkvist conjectured that $ch(m)=1/m$, which if true, would be the best possible.
In this paper, we prove the following result:

\emph{ For every integer $m\ge 3$, let $\alpha(m)$ be the unique real root in $(0,1)$ of the equation}
\begin{equation*}
(1-x)^{m-2}=\frac{3x}{2-x}.
\end{equation*}
Then $ch(m)\le \alpha(m)$.

This generalizes results of Shen who proved that $ch(3)\le 3-\sqrt{7}<0.35425$, and Liang and Xu who showed that $ch(4)< 0.28866$ and $ch(5)<0.24817$.

We then slightly improve the above inequality by using the minimum feedback arc set approach initiated by Chudnovsky, Seymour, and Sullivan. This results in extensions of the findings of Hamburger, Haxell and Kostochka (in the case $m=3$), and Liang and Xu (in the case $m=4$).
\end{abstract}

\maketitle
\thispagestyle{empty}

\section{\bf{Introduction}}
Let $G=(V,E)$ denote a digraph on $n$ vertices with no loops, no cycles of length 2, and no
multiple edges. Let $d^+_G(v)$, or simply $d^+(v)$ if $G$ is specified,
denote the outdegree of the vertex $v$ in $G$. If $G$ has at least one directed cycle,
the minimum length of a cycle in $G$ is called the \emph{girth} of $G$.

In 1978, Caccetta and H\"{a}ggkvist \cite{CH} proposed the following

\begin{conj}\label{CHconj}
Given a positive integer $m\ge 3$, let $ch(m)$ be the smallest positive constant with the following property: every simple directed graph on $n\ge 3$ vertices all whose outdegrees are at least $ch(m)\cdot n$ contains a directed cycle of length at most $m$. Then, $c(m)=1/m$.
\end{conj}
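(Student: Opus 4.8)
The statement to be proved is Conjecture~\ref{CHconj} itself, the Caccetta--H\"aggkvist conjecture, so what follows is the outline of an argument rather than a finished one; I will point out exactly where the genuine difficulty lies. The claim decomposes into two inequalities. The direction $ch(m)\ge 1/m$ --- which records that the constant $1/m$ cannot be lowered, i.e.\ that the bound is best possible --- is the routine one. For each $m$ and each $n$ of the form $n=mk+1$, take the circulant digraph on $\mathbb{Z}_n$ with arc set $\{(i,i+j):1\le j\le k\}$. Every vertex has outdegree $k=(n-1)/m$, and every directed cycle corresponds to a sequence of steps from $\{1,\dots,k\}$ whose sum is a positive multiple of $n$; since $n=mk+1>mk$ and each step is at most $k$, at least $m+1$ steps are required, so the girth equals $m+1>m$. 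Letting $n\to\infty$ along this family shows that no constant strictly below $1/m$ has the stated property, hence $ch(m)\ge 1/m$.

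The substantive direction is $ch(m)\le 1/m$: every $n$-vertex simple digraph with all outdegrees at least $n/m$ contains a directed cycle of length at most $m$. This is precisely the assertion of the conjecture, and is open even for $m=3$; so here I can only describe the line of attack that yields the weaker bounds. One argues by contradiction. Suppose $G$ has minimum outdegree $\ge n/m$ and girth $>m$. Fix a vertex $v_0$ of minimum outdegree and study the iterated out-neighbourhoods $N^+(v_0),\,N^{++}(v_0),\,\dots,\,N^{+m}(v_0)$, together with the out-neighbourhoods of their members. Girth $>m$ forces a chain of disjointness and ``no short return'' constraints on these sets; combining them with the degree hypothesis through inclusion--exclusion produces an inequality in the single unknown $x=ch(m)$. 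The goal is to organize this bookkeeping --- in effect, to find the optimal way of weighting vertices by their distance from $v_0$ --- so that the inequality forces $x\ge 1/m$. A complementary route, the one pursued in the second half of this paper, replaces neighbourhood counting with the minimum feedback arc set $\beta(G)$: one bounds $\beta(G)$ from above using that a digraph of girth $>m$ has many non-adjacent pairs, bounds it from below using the outdegree hypothesis, and again tries to collapse the resulting estimate to $x\ge 1/m$.

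The main obstacle is that none of this bookkeeping is presently tight. Every known refinement --- from the early counting arguments, through Shen's $ch(3)\le 3-\sqrt7$, the later improvements for $m=3$, the bound $ch(m)\le\alpha(m)$ proved below, and the subsequent small gain via $\beta(G)$ --- loses a multiplicative constant somewhere and so lands strictly above $1/m$ rather than at it. Closing that last gap is equivalent to a rigidity statement: one would have to show that the circulant family above is, up to the relevant approximation, the unique extremal configuration, so that any digraph with all outdegrees $\ge n/m$ and girth $>m$ must essentially coincide with it. Formulating that rigidity precisely and proving it seems to require an idea not contained in the current toolkit, which is why this paper proves only $ch(m)\le\alpha(m)$ together with its minor improvement.
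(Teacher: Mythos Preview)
Your assessment is correct, and it matches the paper: the statement is labelled a \emph{conjecture}, and the paper does not prove it. There is no ``paper's own proof'' to compare against; the paper establishes only the weaker bounds $ch(m)\le\alpha(m)$ (Theorem~\ref{mainthm1}) and $ch(m)\le\beta(m)$ (Theorem~\ref{mainthm2}).

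Your treatment of the easy direction $ch(m)\ge 1/m$ via the circulant digraph on $\mathbb{Z}_{mk+1}$ is the standard one and is exactly the construction of Behzad, Chartrand, and Wall cited in the paper. Your honest acknowledgment that the reverse inequality $ch(m)\le 1/m$ is open, and your summary of why the known counting and feedback-arc-set techniques fall short, are accurate and consistent with the paper's own discussion of the state of the art. There is no gap to name beyond the one you already identify: the conjecture is open.
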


If true, the above result is optimal as shown by a construction of Behzad, Chartrand, and Wall \cite{BCW}.
In the case $m=3$ there are several types of extremal digraphs; we refer to \cite{sullivansurvey} for a thorough survey of the literature.

While the general conjecture is still open, some partial results have been obtained.

Caccetta and H\"{a}ggkvist \cite{CH} proved that $ch(3)\le (3-\sqrt{5})/2<0.38197$ by using an inductive argument. Bondy \cite{bondy} used a counting technique to improve this to $ch(3)\le (2\sqrt{6}-3)/5<0.3798$. Soon after, Shen \cite{shen1} showed that $ch(3)\le 3-\sqrt{7}<0.35425$, which was later further improved by Hamburger, Haxell, and Kostochka \cite{HHK} who proved that $ch(3)<0.35312$.

The best currently known bound is due to  Hladk\'{y}, Kr\'{a}l', and Norin \cite{HKN} who used Razborov's \cite{razborov} flag algebra approach
to prove that $ch(3)<0.3465$.

Liang and Xu \cite{LX2, LX1} considered the case $m=4$: they proved that $ch(4)<0.28866$, which they later improved to $ch(4)<0.28724$. They also showed in \cite{LX5} that $ch(5)<0.24817$.

Some general bounds are also known.

Chv{\'a}tal and  Szemer{\'e}di \cite{CS} proved that $ch(m)\le 2/m$.
Improving results in \cite{CS, nishimura}, Shen \cite{shen3} showed that
\begin{equation}\label{shenbounds}
ch(m)\le \frac{3\ln((2+\sqrt{7})/3)}{m-3}<\frac{1.3121}{m-3},\quad \text{and}\quad  ch(m) \le \frac{1}{m-73},
\end{equation}
for all $m\ge 4$, and all $m\ge 74$, respectively.
It follows that asymptotically, $ch(m)\thicksim 1/m$.

Shen \cite{shen2} also proved that for a given $m\ge 3$, the number of counterexamples to the Caccetta-H\"{a}ggkvist conjecture, if any, is finite.

In this paper we generalize a technique of Shen \cite{shen1}, and Liang and Xu \cite{LX2, LX5} in the following
\begin{thm}\label{mainthm1}
For every integer $m\ge 3$, let $\alpha:=\alpha(m)$ be the unique root in $(0,1)$ of the equation
\begin{equation*}
(1-x)^{m-2}=\frac{3x}{2-x}.
\end{equation*}
Then, any digraph on $n$ vertices with minimum outdegree at least $\alpha n$ contains a directed cycle of length at most $m$, that is, $ch(m)\le \alpha(m)$.
\end{thm}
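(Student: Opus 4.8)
\emph{Sketch of the intended proof.}
The plan is to establish the contrapositive in the following quantitative form: if $G$ is a digraph on $n$ vertices (no loops, no $2$-cycles, no multi-edges) with no directed cycle of length at most $m$, then its minimum outdegree $d$ satisfies $\rho:=d/n\le \alpha(m)$. Since $\alpha(m)$ is the root in $(0,1)$ of $(1-x)^{m-2}=\tfrac{3x}{2-x}$ and the function $x\mapsto(1-x)^{m-2}-\tfrac{3x}{2-x}$ is strictly decreasing on $(0,1)$ (positive near $0$, negative at $1$, so the root is unique), the bound $\rho\le\alpha(m)$ is equivalent to the single inequality $(1-\rho)^{m-2}\ge \tfrac{3\rho}{2-\rho}$, and it gives $ch(m)\le\alpha(m)$. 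First I would normalize: deleting out-edges, assume every outdegree equals $d$ (the girth can only increase); and, passing to a minimal counterexample, assume $G$ is strongly connected, since otherwise a terminal strong component is a digraph on fewer vertices with girth $>m$ and minimum-outdegree-to-order ratio at least $\rho$, contradicting minimality.

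The core is a counting argument generalizing Shen's treatment of $m=3$. Fix a vertex $v$ of indegree at least $d$ (one exists since the average indegree is $d$) and put $B=N^-(v)$. The hypothesis "no closed directed walk of length between $2$ and $m$" (equivalent, as $G$ has no $2$-cycle, to "no short cycle") yields two kinds of facts. First, it forbids many edges around $v$: for $0\le k\le m-2$ every vertex reachable from $v$ by a walk of length $\le k$ has no out-edge into $B\cup\{v\}$ (such an edge, closed off by one step back to $v$, would give a closed walk of length $\le m$), and no vertex reachable from $v$ by a walk of length $\le m-1$ lies in $B$. Second, these restrictions pin down a nested family $W_0\supseteq W_1\supseteq\cdots\supseteq W_{m-2}$ — roughly the vertices not yet "accounted for" after $k$ rounds of following out-edges inside the admissible region $V\setminus(B\cup\{v\})$ — with the property that, because every outdegree is exactly $d$, each round swallows at least a $\rho$-fraction of what remains, so that $|W_{m-2}|\le(1-\rho)^{m-2}\,|W_0|$. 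In the opposite direction, the forbidden-edge structure forces the terminal set to retain a definite share of $W_0$: $|W_{m-2}|\ge \tfrac{3\rho}{2-\rho}\,|W_0|$, where the out-neighborhood of $v$ together with the two blocks $\{v\}$ and $B$, and an edge count between them taken in both directions, produce the constant $3$ and the denominator $2-\rho$. Comparing the two estimates gives exactly $(1-\rho)^{m-2}\ge\tfrac{3\rho}{2-\rho}$, hence $\rho\le\alpha(m)$. (As a sanity check, for $m=3$ this reads $\tfrac{3\rho}{2-\rho}\le 1-\rho$, i.e. $\rho^2-6\rho+2\ge0$, i.e. $\rho\le 3-\sqrt7$, which is Shen's bound.)

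The main obstacle, I expect, is making the two opposing estimates precise and tight at the same time: choosing the nested sets $W_k$ so that the minimum-outdegree hypothesis genuinely forces the multiplicative shrinkage $|W_k|\le(1-\rho)|W_{k-1}|$ at each of the $m-2$ steps, while simultaneously extracting from the girth condition the matching lower bound $|W_{m-2}|\ge\tfrac{3\rho}{2-\rho}|W_0|$ with exactly that constant rather than a weaker rational function of $\rho$. The difficulty is intrinsic: large girth forbids only the edges that would actually close a short cycle, so the out-neighborhoods of the $W_k$-vertices may overlap heavily and the naive union or double-counting bounds collapse; getting these overlaps under control so that the exponent is precisely $m-2$ and the constant precisely $\tfrac{3}{2-\rho}$ is the part that must really generalize the computations of Shen ($m=3$) and of Liang and Xu ($m=4,5$). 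Some additional routine care is needed for the degenerate cases (small $n$, vertices of small eccentricity) and for the reduction to the strongly connected case.
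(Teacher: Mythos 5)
Your sketch correctly identifies the target inequality $(1-\rho)^{m-2}\ge \tfrac{3\rho}{2-\rho}$ and checks it against Shen's bound for $m=3$, but the mechanism you propose for obtaining it is not the one that works, and the step you yourself flag as ``the main obstacle'' is precisely where the proof lives. The constant $\tfrac{3}{2-\rho}$ does \emph{not} come from analyzing a single vertex $v$ and a nested chain $W_0\supseteq\cdots\supseteq W_{m-2}$ with a matching lower bound on $|W_{m-2}|$; no such lower bound is available, and the paper never produces one. Instead, the paper proves, for \emph{every edge} $(u,v)$, an inequality of the form $n\ge \tfrac{1-(1-\alpha)^{m-2}}{\alpha}r+d^-(v)+q(u,v)+(1-\alpha)^{m-2}t(u,v)$, where $q(u,v)$ counts induced $2$-paths ending in $(u,v)$ and $t(u,v)$ counts transitive triangles with base $(u,v)$. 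The ``$3$'' and the ``$2-\alpha$'' only appear after summing this over all $nr$ edges and invoking two global facts: the Cauchy--Schwarz bound $\sum_v d^-(v)^2\ge nr^2$, and the bound $T\le\sum_v\binom{r}{2}<nr^2/2$ on the total number of transitive triangles (i.e.\ $\tau<1/2$). Your single-vertex, two-sided-estimate framing has no analogue of either ingredient, so the plan as stated cannot recover the stated constant; at best it recovers something like the weaker Caccetta--H\"aggkvist/Chv\'atal--Szemer\'edi style bounds.

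A second, independent gap: the geometric decay with ratio $(1-\alpha)$ over $m-2$ rounds is driven in the paper by the induction on $n$ applied to \emph{induced subdigraphs}. At step $k$ one looks at the subdigraph induced by $N^+(v)\sqcup S_1\sqcup\cdots\sqcup S_{k-1}$; since it has fewer than $n$ vertices and the theorem holds for it, it contains a vertex $w_k$ of outdegree at most $\alpha$ times its order \emph{inside} that subdigraph, while $w_k$ still has outdegree $r$ in $D$, so at least $r-\alpha(\,|N^+(v)|+\sum|S_i|\,)$ of its outneighbors fall outside, forming a new set $S_k$ disjoint from everything so far (disjointness is exactly what the girth hypothesis guarantees). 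Your reduction to a minimal counterexample only via strong connectivity does not supply this; ``every outdegree equals $d$, so each round swallows a $\rho$-fraction of what remains'' is not a valid substitute, because without the induction hypothesis applied to the induced subdigraph you have no vertex whose out-edges are forced to leave the current set. So both the per-step shrinkage and the final constant require ideas (induction into subdigraphs; edge-averaging with the transitive-triangle count) that are absent from the proposal.
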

In the next section we present a proof of this result. In section \ref{chudnovsky} we improve this bound by using a technique suggested by Chudnovsky, Seymour, and Sullivan \cite{CSS}. This extends findings of Haxell, Hamburger, and Kostochka \cite{HHK} in the case $m=3$, and Liang and Xu \cite{LX1} in the case $m=4$.

\section{\bf Proof of Theorem \ref{mainthm1}}

We prove Theorem \ref{mainthm1} by induction on $n$. It is easily seen that the statement is valid for $n=3$. Let us now suppose that the theorem  holds for all digraphs with fewer that $n$ vertices, and let $D=(V,E)$ be a counterexample with $n$ vertices. Thus, $D$ is a directed graph with minimum outdegree at least $\lceil \alpha n\rceil$ and $D$ contains no directed cycles of length at most $m$. Without loss of generality, we can assume that $D$ is $r$-outregular, where $r= \lceil \alpha n\rceil$, that is, every vertex is of outdegree $r$ in $D$. We intend to reach a contradiction.

We introduce some notation following \cite{shen1, LX2, LX5}.

\noindent For any vertex $v\in V(D)$ let
\begin{align*}
&N^+(v)=\{u\in V(D)\,|\, (v,u)\in E(D)\},\, |N^+(v)|=d^+(v)=r,\,\, \text{the outdegree of}\,\,v, \\
&N^-(v)=\{u\in V(D)\,|\, (u,v)\in E(D)\},\, |N^-(v)|=d^-(v),\,\,\,\,\, \text{the indegree of}\,\,v.
\end{align*}
We say that $(u,v,w)$ is a \emph{transitive triangle} if $(u,v), (u,w), (v,w) \in E(D)$. The edge $(u,v)$ is called the base of the transitive triangle.

For any edge $(u,v)\in E(D)$ define
\begin{align*}
&p(u,v):=|N^+(v)\setminus N^+(u)|,\,\text{the number of induced $2$-paths whose first edge is}\, (u,v),\\
&q(u,v):=|N^-(u)\setminus N^-(v)|,\,\text{the number of induced $2$-paths whose second edge is}\, (u,v),\\
&t(u,v):=|N^+(u)\cap N^+(v)|,\,\text{the number of transitive triangles whose base is}\, (u,v).
\end{align*}
Note that $p(u,v)+t(u,v)=|N^+(v)|=r$.

The following lemma was proved by Shen \cite{shen1} in the case $m=3$, and by Liang and Xu \cite{LX2,LX5} when $m=4, 5$.
\begin{lemma}\label{mainlemma1}
For every edge $(u,v)\in E(D)$ we have that
\begin{equation}\label{mainineq1}
n\ge \frac{1-(1-\alpha)^{m-2}}{\alpha}\,r+d^{-}(v)+q(u,v)+(1-\alpha)^{m-2}\,t(u,v).
\end{equation}
\end{lemma}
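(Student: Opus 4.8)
The plan is to prove, for every edge $(x,y)\in E(D)$ and every integer $k$ with $0\le k\le m-2$, the inequality
\[
n\ \ge\ \frac{1-(1-\alpha)^{k}}{\alpha}\,r+d^{-}(y)+q(x,y)+(1-\alpha)^{k}\,t(x,y),
\]
by induction on $k$; the case $k=m-2$ applied to the edge $(u,v)$ is exactly \eqref{mainineq1}. Only two features of $D$ are used: it is $r$-outregular with $r=\lceil\alpha n\rceil\ge\alpha n$, and it has no directed cycle of length $\le m$ — equivalently, since $D$ is loopless and digon-free, no closed directed walk of length $\le m$, because any such walk contains a directed cycle of length at most its own. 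It is also convenient to note that $\frac{1-(1-\alpha)^{k}}{\alpha}=\sum_{j=0}^{k-1}(1-\alpha)^{j}=1+(1-\alpha)\,\frac{1-(1-\alpha)^{k-1}}{\alpha}$, so that the right-hand side gains one fresh geometric term $r$ and multiplies the accumulated $t$-contribution by $1-\alpha$ each time $k$ increases by $1$.

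For the base case $k=0$ the claim reads $n\ge d^{-}(y)+q(x,y)+t(x,y)$, and this holds because these three numbers count the pairwise disjoint sets $N^{-}(y)$, $N^{-}(x)\setminus N^{-}(y)$ and $N^{+}(x)\cap N^{+}(y)$: if $z\in N^{+}(x)\cap N^{+}(y)$ also lay in $N^{-}(x)$ or in $N^{-}(y)$, then $\{x,z\}$ or $\{y,z\}$ would span a digon. Hence $N^{-}(x)\cup N^{-}(y)\cup\bigl(N^{+}(x)\cap N^{+}(y)\bigr)$ has exactly $d^{-}(y)+q(x,y)+t(x,y)$ elements, which is at most $n$.

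The inductive step derives the case $k$ from the case $k-1$ for the same edge $(x,y)$ by prepending one further level of descendants of $y$. Here the girth is used in force: every vertex $z$ joined to $y$ by a directed path of length $\ell\le m-2$ lies outside $N^{-}(x)\cup N^{-}(y)$, since otherwise that path together with the edge $(z,y)$, respectively together with the edges $(z,x)$ and $(x,y)$, would be a closed directed walk of length $\ell+1\le m$, respectively $\ell+2\le m$. Using this one lower-bounds the set of vertices avoiding $N^{-}(x)\cup N^{-}(y)$ that are out-neighbours of some out-neighbour of $y$ — the apexes $w\in N^{+}(x)\cap N^{+}(y)$ being special in that their out-neighbours already avoid $N^{-}(x)$ via a triangle through $x$ — and then applies the case $k-1$ to a suitably chosen edge $(y,w)$ (or averages it over $w\in N^{+}(y)$), combining with $r\ge\alpha n$. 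The loss incurred along this passage is forced by the arithmetic above to be exactly one factor $1-\alpha$ on the inherited $t$-term plus the insertion of a new term $r$.

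The step I expect to be the main obstacle is precisely this prepending of a level. The naive estimate — $y$ has $r$ out-neighbours, of which all but the already-listed ones are new — is far too weak, because out-neighbourhoods of distinct vertices overlap heavily, so $\bigl|N^{+}(S)\setminus L\bigr|$ need not exceed a fixed value however large $S$ is. Recovering the factor $1-\alpha$ is already the crux of Shen's argument in the case $m=3$ \cite{shen1}, and it is handled there not neighbourhood by neighbourhood but by a global double-counting that bounds from below the number of vertices outside $N^{-}(x)\cup N^{-}(y)$ reachable from $y$ in a bounded number of steps, with the shortfall from the ``ideal'' count controlled precisely by the transitive-triangle term carrying coefficient $1-\alpha$. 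Carrying this double-counting through all $m-2$ levels while keeping the $t$-bookkeeping consistent — so that the coefficient of $t(x,y)$ in the final bound is $(1-\alpha)^{m-2}$ rather than something weaker — and checking that no vertex is counted twice across levels, is the technical heart of the proof; the particular value of $\alpha$ enters nowhere here beyond the inequality $r\ge\alpha n$.
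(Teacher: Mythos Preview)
There is a genuine gap. You explicitly assert that ``only two features of $D$ are used: it is $r$-outregular \ldots\ and it has no directed cycle of length $\le m$''. But the third feature --- that $D$ is a \emph{minimal} counterexample, i.e., that the inductive hypothesis on $n$ applies to every proper induced subdigraph --- is exactly what produces the factor $1-\alpha$, and your inductive step from $k-1$ to $k$ cannot be completed without it. Concretely, the paper obtains each new ``level'' $S_k$ by passing to the subdigraph $G_k$ induced on $N^{+}(v)\sqcup S_1\sqcup\cdots\sqcup S_{k-1}$ (or on $N^{+}(u)\cap N^{+}(v)$ for $k=1$ in the case $t(u,v)>0$); since $G_k$ is a proper induced subdigraph, minimality guarantees a vertex $w_k$ with outdegree at most $\alpha\,|V(G_k)|$ inside $G_k$, and its $\ge r-\alpha\,|V(G_k)|$ escaping out-neighbours form $S_k$. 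That is the sole source of the geometric factor $(1-\alpha)$. If you drop minimality and merely average, the subdigraph on $N^{+}(u)\cap N^{+}(v)$ could be a transitive tournament, so the best $w_1$ you can find has outdegree about $t(u,v)/2$ there; this yields $|S_1|\gtrsim t(u,v)/2$, hence coefficient $1/2$ rather than $1-\alpha$, which is strictly weaker for the relevant $\alpha<1/2$ and does not give the stated inequality.

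Your inductive scheme on $k$ --- ``apply case $k-1$ to a suitably chosen edge $(y,w)$, or average over $w\in N^{+}(y)$, and combine with $r\ge\alpha n$'' --- does not close. Averaging over $w\in N^{+}(y)$ replaces $d^{-}(y)$, $q(x,y)$, $t(x,y)$ by unrelated averages $\frac{1}{r}\sum_w d^{-}(w)$, $\frac{1}{r}\sum_w q(y,w)$, $\frac{1}{r}\sum_w t(y,w)$, and no algebraic use of $r\ge\alpha n$ converts the level-$(k-1)$ bound into the strictly stronger level-$k$ bound for the \emph{same} edge (the right-hand side increases by $(1-\alpha)^{k-1}(r-\alpha\,t(x,y))>0$). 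Nor is Shen's argument a ``global double-counting'': it is precisely the local minimality step just described, applied once for $m=3$. The paper's proof is not an induction on $k$ at all; it builds the disjoint sets $N^{+}(v),\,N^{-}(v),\,N^{-}(u)\setminus N^{-}(v),\,S_1,\ldots,S_{m-2}$ directly, invoking the minimality of $D$ at each stage to locate $w_k$, and the girth assumption to certify disjointness from $N^{-}(u)$ and $N^{-}(v)$. To repair your argument you must reinstate the inductive hypothesis on $n$ and use it to select the vertex $w$ at each level.
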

\begin{proof}
Note that the sets $N^+(v)$, $N^{-}(v)$, and $N^{-}(u)\setminus N^{-}(v)$ are mutually disjoint otherwise there exists a directed triangle.

We divide the proof into two cases depending on whether $t(u,v)=0$ or $t(u,v)>0$.

{\bf \emph{Case 1. $t(u,v)=0$}}

Since the sets mentioned above are disjoint, we have
\begin{align*}
&n\ge |N^+(v)|+|N^-(v)|+|N^-(u)\setminus N^-(v)|,\,\,\text{from which}\\
&n\ge r+d^-(v)+ q(u,v),
\end{align*}
and this proves (\ref{mainineq1}) in the case $m=3$ and $t(u,v)=0$.

One may suppose that $m\ge 4$. We need the following
\begin{claim}\label{claimtuv0}
There exists $m-3$ subsets of $V(D)$, denoted $S_1, S_2, \ldots, S_{m-3}$, such that for every $1\le k\le m-3$ the following properties hold
\begin{align*}
(a)\, &|S_1|+|S_2|+\cdots + |S_k| \ge (1-\alpha)r+(1-\alpha)^2r+\cdots +(1-\alpha)^k r.\\
(b)\, &\text{The distance between vertex}\,\, v\,\, \text{and any vertex of}\,\, S_k\, \text{is at most}\,\, k+1.\\
(c)\, &\text{The sets}\,\, N^+(v), N^{-}(v), N^{-}(u)\setminus N^{-}(v),\,\, S_1, S_2,\ldots S_{k-1} \,\,\text{and,}\,\, S_k\,\,\text{are mutually disjoint.}
\end{align*}
\end{claim}
\begin{proof}
We prove the above claim by finite induction on $k$.

For $k=1$, let $G_1$ be the subdigraph of $D$ induced by $N^{+}(v)$. Since $u \in N^-(v)$ and $N^-(v)\cap N^{+}(v)=\emptyset$, the order of this subgraph is strictly smaller than $n$. By the induction hypothesis, there exists a vertex $w_1 \in N^{+}(v)$ whose outdegree in $G_1$ is $\le \alpha|N^{+}(v)|=\alpha r$. Recall that $w_1$ has outdegree $r$ in $D$.

Let $S_1$ be the set of outneighbors of $w_1$ not contained in $N^{+}(v)$.

Then, clearly $|S_1|\ge r - \alpha r= (1-\alpha)r$. It also follows from the definition of $S_1$ that the distance between vertex $v$ and any vertex of $S_1$ is exactly $2$. In addition, we have $N^+(v)\cap S_1=\emptyset$.

If $x\in N^-(v)\cap S_1$ then $(v,w_1,x)$ is a directed triangle. If $x\in N^-(u)\cap S_1$ then $(u,v,w_1,x)$ is a directed $4$-cycle. This contradicts the assumption that $D$ does not contain directed cycles of length at most $m$. This proves the claim in the case $k=1$.

Suppose that for some $2\le k \le m-3$ we have found the sets $S_1, S_2, \ldots S_{k-1}$ with the properties stated in Claim \ref{claimtuv0}.

Let $G_{k}$ be the subdigraph of $D$ induced by $N^+(v) \sqcup S_1 \sqcup S_2\cdots \sqcup S_{k-1}$. Note that since $u\in N^-(v)$, statement $(c)$  implies that $u$ is not a vertex of this subgraph. By the minimality of $D$, there exists a vertex $w_{k} \in N^+(v) \sqcup S_1 \sqcup S_2\cdots \sqcup S_{k-1}$  whose outdegree within $G_k$ is no greater than $\alpha|N^+(v) \sqcup S_1 \sqcup S_2\cdots \sqcup S_{k-1}|= \alpha(|N^+(v)|+|S_1|+|S_2|+\cdots+ |S_{k-1}|)$.

On the other hand, $w_k$ is a vertex of $D$, so $d^+_D(w_k) = r = \lceil \alpha n \rceil$.

Let $S_{k} $ be the set of all outneighbors of $w_k$ which are not in $N^+(v) \sqcup S_1 \sqcup S_2\cdots \sqcup S_{k-1}$. Then,
\begin{align*}
&|S_k|\ge r-\alpha(|N^+(v)|+|S_1|+|S_2|+\cdots+ |S_{k-1}|)=r-\alpha r- \alpha(|S_1|+|S_2|+\cdots |S_{k-1}|)\implies\\
&|S_1|+|S_2|+\cdots |S_k|\ge (1-\alpha)r+(1-\alpha)(|S_1|+|S_2|+\cdots |S_{k-1}|)\implies\\
&|S_1|+|S_2|+\cdots |S_k|\ge (1-\alpha)r+(1-\alpha)^2r+\cdots (1-\alpha)^k r,
\end{align*}
and this proves the inductive step for statement $(a)$.

Since $w_k\in N^+(v) \sqcup S_1 \sqcup S_2 \sqcup \cdots \sqcup S_{k-1}$, the distance between $v$ and $w_k$ is no greater than one of the values $1, 2, \ldots k$, depending on whether $w_k\in N^+(v)$, or $w_k\in S_i$, for some $1\le i \le k-1$. Since $S_k$ is a subset of the outneighborhood of $w_k$, the inductive step for part $(b)$ of the claim follows.

For proving statement $(c)$ it suffices to show that $S_k$ is disjoint from both $N^-(v)$ and $N^-(u)$.
 If $S_k\cap N^-(v)\ne \emptyset$ then, since the distance from $v$ to any vertex of $S_k$ is no greater than $k+1$, one would obtain a directed cycle of length at most $k+2$. Similarly, if $S_k \cap N^-(u)\neq\emptyset$ then one obtains a directed cycle of length at most $k+3$. In both cases, we contradict the fact that $D$ contains no directed cycles of length at most $m$.
 This concludes the proof of Claim \ref{claimtuv0}
\end{proof}
We can now complete the proof of Case 1.

Since the sets $N^+(v), N^-(v), N^-(u)\setminus N^-(v), S_1, S_2, \ldots, \,\,\text{and}\,\, S_{m-3}$ are mutually disjoint
\begin{align*}
&n\ge |N^+(v)| + |N^-(v)|+ |N^-(u)\setminus N^-(v)|+ |S_1|+ |S_2|+ \cdots + |S_{m-3}| \implies\\
&n\ge r+ d^-(v)+q(u,v)+ (1-\alpha)r+(1-\alpha)^2r+\cdots +(1-\alpha)^{m-3}r \implies\\
&n\ge \frac{1-(1-\alpha)^{m-2}}{\alpha}\,r+d^{-}(v)+q(u,v),\quad \text{as claimed in}\, \eqref{mainineq1}.
\end{align*}
{\bf \emph{Case 2. $t(u,v)>0$}}

In this case there exists some vertex $w_1 \in N^+(u)\cap N^+(v)$ which has outdegree no greater than $\alpha|N^+(u)\cap N^+(v)|=\alpha t(u,v)$ in the subdigraph of $D$ induced by $N^+(u)\cap N^{+}(v)$. It follows that the outdegree of $w_1$ in the subdigraph induced by $N^+(v)$ cannot exceed $\alpha t(u,v)+p(u,v)= r-(1-\alpha) t(u,v)$.

Let $S_1$ be the set of outneighbors of $w_1$ not contained in $N^{+}(v)$.

Then, clearly $|S_1|\ge (1-\alpha)t(u,v)$. Also, it follows from the definition of $S_1$ that the distance between vertex $w_1$ and any vertex of $S_1$ is exactly $1$. Since $(u,w_1)$ and $(v,w_1)$ are directed edges in $D$, it follows that the distance between either one of the vertices $u, v$ and any vertex of $S_1$ is at most $2$.

We also have $N^+(v)\cap S_1=\emptyset$. Note that $S_1$ cannot have vertices in common with either $N^-(v)$ or $N^-(u)$ otherwise a directed triangle occurs.

The following result is similar to Claim \ref{claimtuv0}.
\begin{claim}\label{claimtuv1}
There exists $m-2$ subsets of $V(D)$, denoted $S_1, S_2, \ldots, S_{m-2}$, such that for every $1\le k\le m-2$ the following properties hold:
\begin{align*}
(a)\, &|S_1|+|S_2|+\cdots |S_k| \ge (1-\alpha)r+(1-\alpha)^2r+\cdots +(1-\alpha)^{k-1} r+(1-\alpha)^k t(u,v)\\
(b)\, &\text{The distance between vertex}\,\, w_1\,\, \text{and any vertex of}\,\, S_k\, \text{is at most}\,\, k.\\
(c)\, &\text{The distance between any of the vertices}\,\, u, v\,\, \text{and any vertex of}\,\, S_k\, \text{is at most}\,\, k+1.\\
(d)\, &\text{The sets}\,\, N^+(v), N^{-}(v), N^{-}(u)\setminus N^{-}(v),\,\, S_1, S_2,\ldots S_{k-1}, \,\,\text{and}\,\, S_k\,\,\text{are mutually disjoint.}
\end{align*}
\end{claim}
\begin{proof}
We prove this claim by finite induction on $k$.
The base case $k=1$ was proved above.

Suppose that for some $2\le k \le m-2$ we have found the sets $S_1, S_2, \ldots S_{k-1}$ with the properties stated in Claim \ref{claimtuv1}.

Let $G_{k}$ be the subdigraph of $D$ induced by $N^+(v) \sqcup S_1 \sqcup S_2\cdots \sqcup S_{k-1}$. Note that since $u\in N^-(v)$, statement $(d)$  implies that $u$ is not a vertex of this subgraph. By the minimality of $D$, there exists a vertex $w_{k} \in N^+(v) \sqcup S_1 \sqcup S_2\cdots \sqcup S_{k-1}$  whose outdegree within $G_k$ is no greater than $\alpha|N^+(v) \sqcup S_1 \sqcup S_2\cdots \sqcup S_{k-1}|= \alpha(|N^+(v)|+|S_1|+|S_2|+\cdots+ |S_{k-1}|)$. On the other hand, $w_k$ is a vertex of $D$, so $d^+_D(w_k) = r = \lceil \alpha n \rceil$.

Let $S_{k} $ be the set of all outneighbors of $w_k$ which are not in $N^+(v) \sqcup S_1 \sqcup S_2\cdots \sqcup S_{k-1}$. Then,
\begin{align*}
&|S_k|\ge r-\alpha(|N^+(v)|+|S_1|+|S_2|+\cdots+ |S_{k-1}|)=r-\alpha r- \alpha(|S_1|+|S_2|+\cdots |S_{k-1}|)\implies\\
&|S_1|+|S_2|+\cdots |S_k|\ge (1-\alpha)r+(1-\alpha)(|S_1|+|S_2|+\cdots +|S_{k-1}|)\implies\\
&|S_1|+|S_2|+\cdots |S_k|\ge (1-\alpha)r+(1-\alpha)^2r+\cdots +(1-\alpha)^{k-1} r+ (1-\alpha)^k t(u,v),
\end{align*}
and this proves the inductive step for statement $(a)$.

Since $w_k\in N^+(v) \sqcup S_1 \sqcup S_2 \sqcup \cdots \sqcup S_{k-1}$, the distance between $w_1$ and $w_k$ is no greater than one of the values $0, 1, 2, \ldots k-1$, depending on whether $w_k\in N^+(v)$, or $w_k\in S_i$, for some $1\le i \le k-1$. Since $S_k$ is a subset of the outneighborhood of $w_k$, the inductive step for part $(b)$ of the claim follows.

Since $(u,w_1), (v,w_1)\in E(D)$, statement $(c)$ follows immediately from $(b)$.

Finally, for proving part $(d)$, it suffices to show that $S_k$ is disjoint from both $N^-(u)$ and $N^-(v)$.
 If $S_k\cap N^-(u)\ne \emptyset$ or $S_k\cap N^-(v)\ne \emptyset$ then, since the distance from either $u$ or $v$ to any vertex of $S_k$ is no greater than $k+1$, one would obtain a directed cycle of length at most $k+2$. However, this contradicts the assumption that $D$ contains no directed cycles of length at most $m$.
 This concludes the proof of Claim \ref{claimtuv1}
\end{proof}
We can now finalize the proof of Case 2.

Since the sets $N^+(v), N^-(v), N^-(u)\setminus N^-(v), S_1, S_2, \ldots, S_{m-2}$ are mutually disjoint
\begin{align*}
&n\ge |N^+(v)| + |N^-(v)|+ |N^-(u)\setminus N^-(v)|+ |S_1|+ |S_2|+ \cdots + |S_{m-2}| \implies\\
&n\ge r+ d^-(v)+q(u,v)+ (1-\alpha)r+(1-\alpha)^2r+\cdots +(1-\alpha)^{m-3}r+(1-\alpha)^{m-2} t(u,v) \implies\\
&n\ge \frac{1-(1-\alpha)^{m-2}}{\alpha}\,r+d^{-}(v)+q(u,v)+(1-\alpha)^{m-2} t(u,v),\quad \text{as stated}.
\end{align*}
The proof of Lemma \ref{mainlemma1} is now complete.
\end{proof}
For proving Theorem \ref{mainthm1} we will simply sum up inequality \eqref{mainineq1} over all edges $(u,v) \in E$.
Denote
\begin{equation}\label{tau}
T:=\sum_{(u,v)\in E}t(u,v),\,\, \text{ the number of transitive triangles in $D$, and}\,\, \tau:=\frac{T}{nr^2}.
\end{equation}
Recall that all vertices of digraph $D$ have outdegree $r=\lceil \alpha n \rceil$. Then
\begin{align*}
&\sum_{(u,v)\in E}n= n^2r,\\
&\sum_{(u,v)\in E}q(u,v)=\sum_{(u,v)\in E}p(u,v)=\sum_{(u,v)\in E}(r-t(u,v))=nr^2-T = nr^2(1-\tau),\,\,\text{and}\\
&\sum_{(u,v)\in E}d^-(v)=\sum_{v\in V}(d^-(v))^2\ge \frac{1}{n}\left(\sum_{v\in V}d^-(v)\right)^2=\frac{1}{n}\left(\sum_{v\in V}d^+(v)\right)^2=nr^2.
\end{align*}
Summing inequalities \eqref{mainineq1} over all edges $(u,v) \in E$ we obtain that
\begin{equation*}
n^2r\ge \frac{1-(1-\alpha)^{m-2}}{\alpha}nr^2+nr^2+(nr^2-T)+(1-\alpha)^{m-2}T,
\end{equation*}
which after dividing both sides by $nr^2$ and rearranging gives
\begin{align}
&\frac{1}{\alpha}\ge \frac{n}{r}\ge \frac{1-(1-\alpha)^{m-2}}{\alpha}+2-\left(1-(1-\alpha)^{m-2}\right)\tau\implies\notag\\
&\tau\left(1-(1-\alpha)^{m-2}\right)\ge 2-\frac{(1-\alpha)^{m-2}}{\alpha}.\label{tauineq1}
\end{align}
Moreover, it is easy to see that the number of transitive triangles is no greater than the number of out-2-claws, that is
\begin{equation}\label{weaktau}
T\le \sum_{v\in V}{d^+(v) \choose 2}=\sum_{v\in V}{r \choose 2}= \frac{n(r^2-r)}{2}< \frac{nr^2}{2},\quad \text{from which it follows that}\,\,\tau <1/2.
\end{equation}
Using this inequality into \eqref{tauineq1} it follows that
\begin{equation*}
(1-\alpha)^{m-2}>\frac{3\alpha}{2-\alpha},
\end{equation*}
contradicting the choice of $\alpha=\alpha(m)$. This proves Theorem \ref{mainthm1}.

As mentioned earlier, Theorem \ref{mainthm1} was proved by Shen \cite{shen1} in the case $m=3$, and by Liang and Xu \cite{LX2, LX5} when $m=4, 5$.
We present some numerical estimates below
\begin{align}\label{estimates}
&ch(3)\le \alpha(3)< 0.35425, ch(4)\le \alpha(4)<0.28866, ch(5)\le \alpha(5)<0.24817\notag\\
&ch(6)\le \alpha(6)< 0.21984, ch(7)\le \alpha(7)<0.19856, ch(8)\le \alpha(8)=0.18182.
\end{align}
While it is easy to prove that $\alpha(m)\longrightarrow 0$ as $m$ approaches infinity, it would be interesting to find its exact order of magnitude. Rewrite the equation that defines $\alpha:=\alpha(m)$ as follows
\begin{align*}
&(1-\alpha)^{m-2}\cdot\left(1-\frac{\alpha}{2}\right)=\frac{3}{2}\alpha \implies \exp\left((m-2)\ln(1-\alpha)+\ln\left(1-\frac{\alpha}{2}\right)\right)=\frac{3}{2}\alpha\implies\\
&\exp\left((m-2)\left(-\alpha-\frac{\alpha^2}{2}-\frac{\alpha^3}{3}-\cdots\right)+
\left(-\frac{\alpha}{2}-\frac{\alpha^2}{8}-\frac{\alpha^3}{24}-\cdots\right)\right)=\frac{3}{2}\alpha\implies\\
&\exp(-(m-2.5)\alpha)\ge \frac{3}{2}\alpha\implies (m-2.5)\alpha\cdot\exp((m-2.5)\alpha)\le \frac{2}{3}(m-2.5) \,\, \text{from which}
\end{align*}
\begin{equation*}
\alpha\le \frac{W_0(\frac{2}{3}(m-2.5))}{m-2.5}.
\end{equation*}
Here, $W_0$ is the real branch of Lambert's omega function.

It follows that $ch(m)\le \alpha(m)=O\left(\frac{\ln{m}}{m}\right)$, and for large values of $m$ this estimate is weaker than the bounds given in \eqref{shenbounds}. However, for small values of $m$, the estimates in \eqref{estimates} are close to the best currently known.


\section{\bf {Slight improvements}}\label{chudnovsky}

In the final portion of the proof of Theorem \ref{mainthm1}, we used that $\tau<1/2$ - see \eqref{weaktau}.

As noticed by Chudnovsky, Seymour and Sullivan \cite{CSS}, this bound is susceptible for improvement. We will present their approach below.

Let $m\ge 3$. A simple digraph $G$ is said to be \emph{$m$-free} if there is no directed cycle of $G$ of length at most $m$. A digraph is \emph{acyclic} if it has no directed cycles. Given an $m$-free digraph, one might ask how many edges must be removed before the graph becomes acyclic.

For a given a digraph $G$, let $\beta(G)$ be the size of the smallest subset $X\subseteq E(G)$ such that $G \setminus X$ is acyclic, and let $\gamma(G)$ be the number of unordered pairs of nonadjacent vertices in $G$, called the \emph{number of missing edges} of $G$.
Chudnovsky, Seymour and Sullivan raised the question of bounding $\beta(G)$ by some function of $\gamma(G)$.

They proved that if $G$ is a $3$-free digraph then $\beta(G)\le \gamma(G)$. This was subsequently improved by Dunkum et al. to $\beta(G)\le 0.88\gamma(G)$, and further by Chen et al. to $\beta(G)\le 0.8616\gamma(G)$. It is conjectured that for a $3$-free digraph, $\beta(G)\le 0.5\gamma(G)$.

Sullivan proved that for an $m$-free digraph $\beta(G)\le \frac{1}{m-2}\gamma(G)$ for $m=4, 5$, and this was generalized by Liang and Xu for all $m\ge 3$.

They also proved that if $G$ is a $4$-free digraph, then $\beta(G)\le \frac{3-\sqrt{5}}{2}\gamma(G)\approx0.3819\gamma(G)$, and that if $G$ is a $5$-free digraph, then $\beta(G)\le (2-\sqrt{3})\gamma(G)\approx 0.2679\gamma(G)$.

Sullivan \cite{sullivanthesis} proposed the following general conjecture which if true would be best possible.
\begin{conj}
If $G$ is an $m$-free digraph with $m\ge 3$ then
\begin{equation*}
\beta(G)\le \frac{2}{(m+1)(m-2)}\gamma(G).
\end{equation*}
\end{conj}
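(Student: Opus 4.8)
This statement is an open conjecture---its $m=3$ case is exactly the still-unresolved claim that $\beta(G)\le\tfrac12\gamma(G)$ for $3$-free digraphs, where the best bound available is $0.8616\,\gamma(G)$---so what follows is an attack plan rather than a proof. The first thing I would pin down is why the constant is the right one, since this dictates the shape of any argument. For $t$ coprime to $m$, let $D_{m,t}$ be obtained from the directed cycle $C_{m+1}$ by blowing up each vertex into an independent set of size $t$ carrying a transitive tournament, and inserting all $t^2$ arcs from the $i$-th blob to the $(i+1)$-st. Then $D_{m,t}$ is $m$-free, since its shortest directed cycle has length $m+1$; within a blob and between consecutive blobs every pair of vertices is adjacent, so only the $\binom{m+1}{2}-(m+1)$ pairs of non-consecutive blobs contribute, giving $\gamma(D_{m,t})=\bigl(\binom{m+1}{2}-(m+1)\bigr)t^2=\tfrac{(m+1)(m-2)}{2}\,t^2$; and $\beta(D_{m,t})=t^2$, because one full layer of $t^2$ arcs is a feedback arc set while the $t^2$ cycles whose $i$-th vertex is $a+ib$ (blobs indexed by $\mathbb Z_t$, $(a,b)\in\mathbb Z_t^2$) are pairwise arc-disjoint. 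Hence $\beta/\gamma=\tfrac{2}{(m+1)(m-2)}$, so any proof must be tight against this family, and in particular must use the transitive-tournament structure inside the blobs, not merely the cycle skeleton.

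For the bound itself I would run the feedback-arc-set framework of Chudnovsky--Seymour--Sullivan in the refined form of Sullivan and of Liang--Xu. Take a counterexample $G$ minimizing the number of vertices and, subject to that, the number of arcs; after the usual reductions one may assume $G$ is strongly connected ($\beta$ is additive over strong components while $\gamma$ is superadditive). Fix a linear order $v_1\prec\cdots\prec v_n$ whose set $B$ of backward arcs $(v_j,v_i)$, $i<j$, has $|B|=\beta(G)$, chosen so that relocating any single vertex cannot decrease $|B|$; this yields the standard local inequalities, for instance that for all $i<j$ the number of arcs from $\{v_i,\dots,v_{j-1}\}$ into $v_j$ is at least the number of arcs from $v_j$ into $\{v_i,\dots,v_{j-1}\}$. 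The input from $m$-freeness is that for a backward arc $e=(v_j,v_i)$ every directed path from $v_i$ to $v_j$ has at least $m$ arcs, and iterating this along the lines of the set construction behind Lemma \ref{mainlemma1} forces the vertices lying between $v_i$ and $v_j$ in the order, together with their out-neighbourhoods, to be missing a controlled number of arcs.

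The heart of the matter is a discharging scheme: give each backward arc the charge $\tfrac{(m+1)(m-2)}{2}$ and redistribute it onto the unordered non-adjacent pairs of $G$ so that no such pair receives total charge exceeding $1$; summing then gives $\beta(G)=|B|\le\tfrac{2}{(m+1)(m-2)}\gamma(G)$, contradicting the choice of $G$. Designing the routing rule is the real work, because backward arcs nest and cross and a naive rule overcharges some pairs. A viable rule should send a backward arc's charge to missing pairs lying inside its span, down-weighted by how many backward arcs share that span---this is the mechanism behind Liang--Xu's $\tfrac{3-\sqrt{5}}{2}\gamma$ for $m=4$ and $(2-\sqrt{3})\gamma$ for $m=5$---and to reach the conjectured constant one would additionally have to route charge onto the missing pairs forced inside each ``blob'' of the near-extremal structure, thereby exploiting the within-blob transitive tournaments that make $D_{m,t}$ sharp.

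The main obstacle is precisely the gap between what these arguments deliver and the target: $0.8616$ against $0.5$ for $m=3$, about $0.38$ against $0.2$ for $m=4$. I expect the routine steps---reduction to a strongly connected $G$, the optimal-order inequalities, the path-length lemma---to go through, but the final constant to stay out of reach, since these methods only use that certain directed paths are long and not the finer global geometry that $D_{m,t}$ exhibits. Closing the gap would most plausibly demand a stability theorem---showing that $\beta(G)$ close to $\tfrac{2}{(m+1)(m-2)}\gamma(G)$ forces $G$ to be structurally close to a blow-up of $C_{m+1}$, after which one finishes with a direct argument on near-extremal digraphs---or a flag-algebra/semidefinite computation in the spirit of Hladk\'{y}--Kr\'{a}l'--Norin, which should sharpen the numerics but would likely still stop short of the exact value. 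This is why the conjecture, even for $m=3$, remains open.
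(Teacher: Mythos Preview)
The paper does not prove this statement: it is recorded as Sullivan's open conjecture, with no proof offered or attempted. You correctly recognise this and supply a research outline rather than a proof, so there is nothing in the paper to compare your approach against.

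A few remarks on the content of your outline. Your extremal construction is the right one and the computation $\gamma(D_{m,t}) = \tfrac{(m+1)(m-2)}{2}t^{2}$ is correct, but the phrase ``independent set of size $t$ carrying a transitive tournament'' is self-contradictory---you mean simply a transitive tournament on $t$ vertices. The conclusion $\beta(D_{m,t})=t^{2}$ is also correct, but your proposed family of arc-disjoint cycles indexed by $(a,b)\in\mathbb Z_t^{2}$ does not work as written: the walk $(a,a+b,a+2b,\dots)$ returns to blob $0$ at vertex $a+(m+1)b$, which equals $a$ only when $t\mid (m+1)b$, so these are not in general closed cycles of length $m+1$. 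A clean lower bound comes from LP duality: assign weight $t^{\,1-m}$ to each of the $t^{\,m+1}$ directed $(m+1)$-cycles that use one vertex per blob; every cross arc lies in exactly $t^{\,m-1}$ of them and hence carries total weight $1$, while intra-blob arcs carry weight $0$, so $\beta\ge t^{2}$. Your coprimality hypothesis on $t$ and $m$ plays no role and can be dropped.

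Your discharging sketch is a fair summary of where the known partial bounds (Chudnovsky--Seymour--Sullivan, Liang--Xu) come from, and your own assessment that this framework, as presently understood, falls short of the conjectured constant is accurate---which is exactly why the paper states the result as a conjecture rather than a theorem.
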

For every $m\ge 3$, let $c_m$ be defined as follows
\begin{equation}\label{C}
c_3=0.8616, c_4=\frac{3-\sqrt{5}}{2},c_5=2-\sqrt{3},\,\,\text{and}\,\, c_m=\frac{1}{m-2}\,\,\, \text{if}\,\, m\ge 6.
\end{equation}
The previous discussion implies the following
\begin{fact}\label{lemma1}
If an $m$-free digraph $G$ has $e$ missing edges, then one can delete from $G$ an additional $c_m e$ edges so that the resulting digraph is acyclic.
\end{fact}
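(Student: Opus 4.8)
The plan is to obtain this statement as an immediate repackaging of the feedback–arc–set bounds surveyed immediately above. Recall the two quantities involved: $\beta(G)$ is the minimum size of an arc set $X\subseteq E(G)$ with $G\setminus X$ acyclic, and $\gamma(G)$ is the number of missing edges of $G$. Writing $e=\gamma(G)$, what has to be shown is precisely $\beta(G)\le c_m e$. Once that inequality is in hand, one deletes a minimum feedback arc set: it has size $\beta(G)$, an integer not exceeding $c_m e$, and removing it leaves an acyclic digraph, which is exactly the assertion of the Fact. So the only mathematical content is the bound $\beta(G)\le c_m e$, and the rest is the (trivial) integrality remark that turns ``$\beta(G)\le c_m e$'' into ``one may delete at most $c_m e$ further edges.''

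Accordingly, I would split the verification into the four regimes dictated by the definition \eqref{C} of $c_m$. For $m=3$ the needed inequality $\beta(G)\le 0.8616\,\gamma(G)$ is the theorem of Chen et al. For $m=4$ it is Liang and Xu's bound $\beta(G)\le \tfrac{3-\sqrt5}{2}\gamma(G)$, and for $m=5$ their bound $\beta(G)\le (2-\sqrt3)\gamma(G)$. For $m\ge 6$ one invokes the Liang--Xu generalization of Sullivan's $m=4,5$ result, namely $\beta(G)\le \tfrac1{m-2}\gamma(G)$ for every $m\ge 3$; specializing to $m\ge 6$ gives exactly the value $c_m=\tfrac1{m-2}$ chosen in \eqref{C}. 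In each of the four cases the conclusion of the Fact follows in one line.

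The ``hard part'' here is not mathematical but bibliographic: one must check that the constant selected in \eqref{C} is in each case no larger than the best published constant for that value of $m$ (so that the cited theorem really delivers the stated bound), and that the general $\tfrac1{m-2}$ estimate is valid for all $m\ge 3$ rather than only above some threshold --- it is this uniform validity that licenses its use for every $m\ge 6$. Since no such obstacle genuinely arises, I would keep the write-up to a few lines: state $e=\gamma(G)$, recall $\beta(G)\le c_m\gamma(G)$ with a citation appropriate to the range of $m$, and note that deleting a feedback arc set of size $\beta(G)\le c_m e$ yields an acyclic digraph. This uniform formulation is precisely what gets fed into the improved counting argument of Section~\ref{chudnovsky}.
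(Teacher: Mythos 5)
Your proposal is correct and is exactly what the paper intends: the Fact is stated as an immediate consequence of the surveyed feedback-arc-set bounds ($\beta(G)\le c_m\gamma(G)$, citing Chen et al.\ for $m=3$, Liang--Xu for $m=4,5$, and the Liang--Xu generalization of Sullivan's bound, valid for all $m\ge 3$, for $m\ge 6$), with no further argument given. Your case split by the ranges in \eqref{C} and the remark about deleting a minimum feedback arc set match the paper's (implicit) reasoning.
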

The following lemma generalizes a result of Haxell, Hamburger, and Kostochka.
\begin{lemma}\label{lemma2}
If an $m$-free digraph $G$ has $e$ missing edges, then it has a vertex whose outdegree is no greater than $\sqrt{2c_m e}$.
\end{lemma}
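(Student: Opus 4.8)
The plan is to combine Fact~\ref{lemma1} with a topological-sorting argument, charging out-edges against a small feedback arc set rather than against the missing edges directly. Since $G$ is $m$-free, Fact~\ref{lemma1} supplies a set $X\subseteq E(G)$ with $|X|\le c_m e$ for which $D:=G\setminus X$ is acyclic. I would fix a topological ordering $v_1,v_2,\ldots,v_n$ of $D$, so that every edge of $D$ points from a smaller index to a larger one, and set $d:=\min_{v}d^+_G(v)$. The goal is to show $d\le\sqrt{2c_m e}$; one may assume $d\ge 1$, since otherwise some vertex already has outdegree $0$.

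The key observation is that the vertices appearing late in the topological order must have lost many of their out-edges to $X$. Concretely, for $0\le k\le n-1$ every out-edge of $v_{n-k}$ that survives in $D$ must point to one of the $k$ vertices $v_{n-k+1},\ldots,v_n$; hence $v_{n-k}$ has at most $k$ out-edges in $D$, so at least $d^+_G(v_{n-k})-k\ge d-k$ of its out-edges belong to $X$.

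I would then sum this bound over $k=0,1,\ldots,d-1$, which is legitimate because $d\le n-1$. Since each edge of $X$ is an out-edge of exactly one vertex, the $X$-edges leaving $v_n,v_{n-1},\ldots,v_{n-d+1}$ are pairwise distinct, and counting them gives
\[
c_m e \;\ge\; |X| \;\ge\; \sum_{k=0}^{d-1}(d-k) \;=\; d+(d-1)+\cdots+1 \;=\; \binom{d+1}{2} \;\ge\; \frac{d^2}{2},
\]
so that $d\le\sqrt{2c_m e}$, and the vertex attaining the minimum outdegree is the one claimed.

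The argument is short and elementary once Fact~\ref{lemma1} is available, so I do not anticipate a genuine obstacle; the only step that requires care is recognizing what to charge against. The naive attempt to bound outdegrees directly by the number of missing edges loses the right dependence, whereas charging against the feedback arc set, together with the fact that the tail of any topological order of $G\setminus X$ is forced to absorb $\binom{d+1}{2}$ deleted edges, produces exactly the bound $\sqrt{2c_m e}$. Minor bookkeeping (the trivial case $d=0$ and checking the index range $d\le n-1$) is all that remains.
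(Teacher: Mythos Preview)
Your proof is correct and essentially identical to the paper's: both delete a feedback arc set of size at most $c_m e$ via Fact~\ref{lemma1}, take a topological ordering of the resulting acyclic digraph, and observe that the last $d$ vertices must together contribute at least $1+2+\cdots+d>d^2/2$ edges to the deleted set, forcing $d\le\sqrt{2c_m e}$. The only cosmetic difference is that the paper sets $d=\lceil\sqrt{2c_m e}\rceil$ and argues by contradiction, while you take $d$ to be the minimum outdegree and bound it directly.
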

\begin{proof}
Let $d = \lceil\sqrt{2c_m e}\rceil$. By the previous fact, $G$ contains an acyclic digraph $G^\prime$ with at least $|E(G)| - c_m e$ edges. Arrange the vertices of $G^\prime$ in an order $v_1, v_2,\ldots, v_n$ so that there are no backward edges. If $G$ has no vertices with outdegree less than $d$, then for each $ n-d+1\le i \le n$, the set $E(G)\setminus E(G^\prime)$ contains at least $i-(n-d)$ edges starting at vertex $v_i$. Hence,
\begin{equation*}
c_m e\ge |E(G)|-|E(G')|\ge 1+2+\cdots +d=\frac{d^2+d}{2}>\frac{d^2}{2}\ge \frac{(\sqrt{2c_m e})^2}{2}=c_m e,\,\, \text{a contradiction}.
\end{equation*}
\end{proof}
\begin{thm}\label{mainthm2}
Consider the following quantities,
\begin{align}
&\beta(3)=0.35296, \beta(4)=0.28688, \beta(5)=0.24647,\notag\\
&\beta(6)=0.21851, \beta(7)=0.19732, \beta(8)=0.18068\label{betterbounds}.
\end{align}
Then, for every $3\le m\le 8$, any digraph on $n$ vertices with minimum outdegree at least $\beta(m)\cdot n$ contains a directed cycle of length at most $m$, that is, $ch(m)\le \beta(m)$.
\end{thm}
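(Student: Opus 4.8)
The plan is to run the inductive proof of Theorem~\ref{mainthm1} again, but with the constant $\alpha(m)$ replaced everywhere by $\beta(m)$, and to feed in the feedback--arc--set estimate (Lemma~\ref{lemma2}) at the single place where the crude bound $\tau<1/2$ was the bottleneck. First I would set up the induction on $n$: the case $n=3$ is immediate, and for the inductive step I assume the statement for all digraphs on fewer than $n$ vertices and take a counterexample $D$ on $n$ vertices, which as before may be taken $m$-free and $r$-outregular with $r=\lceil\beta(m)\,n\rceil$. The induction hypothesis now says exactly that every induced subdigraph $H$ of $D$ (being $m$-free and of order $<n$) contains a vertex of outdegree strictly less than $\beta(m)\,|V(H)|$; since this is precisely the fact that drives Claims~\ref{claimtuv0} and~\ref{claimtuv1}, the analogue of Lemma~\ref{mainlemma1} holds verbatim with $\alpha$ replaced by $\beta:=\beta(m)$.

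The improvement enters at the first step of building the sets $S_k$, where one needs a vertex of small outdegree in an $m$-free induced subdigraph $H$ --- namely $H=D[N^{+}(v)]$ on $r$ vertices when $t(u,v)=0$ (Case 1), and $H=D[N^{+}(u)\cap N^{+}(v)]$ on $t(u,v)$ vertices when $t(u,v)>0$ (Case 2). Alongside the vertex of outdegree $<\beta|V(H)|$ supplied by the induction hypothesis, I would invoke Lemma~\ref{lemma2} to supply a vertex of outdegree at most $\sqrt{2c_m\,e(H)}$, where $e(H)$ is the number of missing edges of $H$, and keep whichever is smaller. This replaces $|S_1|\ge(1-\beta)\,|V(H)|$ by $|S_1|\ge|V(H)|-\min\bigl(\beta\,|V(H)|,\sqrt{2c_m\,e(H)}\bigr)$, so that carrying the same recursion through yields the strengthened inequality
\[
n\ \ge\ \frac{1-(1-\beta)^{m-2}}{\beta}\,r+d^{-}(v)+q(u,v)+(1-\beta)^{m-2}\,t(u,v)+\Gamma(u,v),
\]
where $\Gamma(u,v)=(1-\beta)^{m-3}\max\bigl(0,\ \beta\,t(u,v)-\sqrt{2c_m\,e(D[N^{+}(u)\cap N^{+}(v)])}\bigr)\ge0$ in Case 2, and $\Gamma(u,v)=(1-\beta)^{m-4}\max\bigl(0,\ \beta r-\sqrt{2c_m\,e(D[N^{+}(v)])}\bigr)\ge0$ in Case 1 (the latter occurring only for $m\ge4$).

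Next I would sum this over all edges, exactly as in the proof of Theorem~\ref{mainthm1}. The terms not involving $\Gamma$ recombine as there, so with $\tau$ as in \eqref{tau} one gets the analogue of \eqref{tauineq1} with an extra summand,
\[
\tau\bigl(1-(1-\beta)^{m-2}\bigr)\ \ge\ 2-\frac{(1-\beta)^{m-2}}{\beta}+\frac{1}{nr^2}\sum_{(u,v)\in E}\Gamma(u,v).
\]
To bound the last sum from below I would use the identities $e(D[N^{+}(v)])=\binom r2-|E(D[N^{+}(v)])|$, whence $\sum_{v}e(D[N^{+}(v)])=n\binom r2-T$, tying the Case 1 missing-edge counts directly to $\tau$; and $\sum_{(u,v)\in E}e(D[N^{+}(u)\cap N^{+}(v)])=\sum_{(u,v)\in E}\binom{t(u,v)}{2}-T_4$, where $T_4$ is the number of transitive $4$-vertex sub-tournaments of $D$, a quantity one controls by convexity from $\sum_{(u,v)\in E}t(u,v)=T$. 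Since $e\mapsto\max(0,\beta s-\sqrt{2c_m e})$ is convex, Jensen's inequality then lets me replace the averages of the $\Gamma$'s by the $\Gamma$ of the averages, reducing the displayed inequality to one involving only $\beta$, $\tau$ and $c_m$. Combined with $0\le\tau<1/2$ from \eqref{weaktau}, this inequality can hold only for $\beta$ below a threshold $\beta(m)$ --- the largest $\beta$ for which it is consistent --- which is the root of the resulting transcendental equation; plugging in the values of $c_m$ from \eqref{C} and evaluating numerically for $3\le m\le8$ yields the six constants in \eqref{betterbounds}, and for $\beta=\beta(m)$ the inequality fails, giving the desired contradiction and completing the induction.

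The step I expect to be the main obstacle is this last estimate. Because $\Gamma$ involves a square root of the missing-edge counts, after summation the inequality is no longer polynomial in $\tau$, as it was in the proof of Theorem~\ref{mainthm1}, but carries a $\sqrt{1-2\tau}$-type term; the contradiction must therefore be extracted by a genuine one-variable optimization over $\tau\in[0,\tfrac12)$ rather than by comparison with a clean algebraic root. Moreover, when $m=3$ the Case 1 subgraph contributes no set $S_k$ at all, so the entire gain is carried by the common out-neighborhoods, and --- since $c_3$ is not far below $1$ --- a crude bound on $T_4$ (and likewise on $\sum_{(u,v)}\binom{t(u,v)}{2}$) will not suffice: one needs a reasonably sharp one. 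Finally, keeping careful track, throughout the summation, of which edges fall into Case 1 and which into Case 2 will require some bookkeeping.
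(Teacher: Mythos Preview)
Your overall framework is correct and matches the paper: you sharpen the first step of the $S_k$ construction in Case~2 by replacing the inductive bound $\alpha\,t(u,v)$ on the outdegree of $w_1$ in $D[N^+(u)\cap N^+(v)]$ with the feedback--arc--set bound $\sqrt{2c_m f(u,v)}$ from Lemma~\ref{lemma2}, yielding (after the same recursion) the strengthened inequality
\[
n\ \ge\ \frac{1-(1-\beta)^{m-2}}{\beta}\,r+d^{-}(v)+q(u,v)+(1-\beta)^{m-3}\bigl(t(u,v)-\sqrt{2c_m f(u,v)}\bigr),
\]
and then you sum over edges. The paper does exactly this (and ignores your Case~1 refinement, which is not needed for the stated constants).

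The genuine gap is in how you propose to control $\sum_{(u,v)}\sqrt{f(u,v)}$. Your plan is to write $\sum_{(u,v)} f(u,v)=\sum_{(u,v)}\binom{t(u,v)}{2}-T_4$ and handle both pieces ``by convexity from $\sum t(u,v)=T$''. But convexity (Jensen) goes the wrong way: it gives a \emph{lower} bound on $\sum\binom{t(u,v)}{2}$, whereas you need an \emph{upper} bound; and it gives no handle at all on $T_4$ in terms of $T$ alone. Likewise, applying Jensen to the convex function $(t,e)\mapsto\max(0,\beta t-\sqrt{2c_m e})$ lower-bounds the sum by the value at the averages, but the average of $f(u,v)$ is precisely the unknown quantity you are trying to eliminate. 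With only the trivial bound $f(u,v)\le\binom{t(u,v)}{2}$ one gets $\sqrt{2c_m f(u,v)}\le\sqrt{c_m}\,t(u,v)$, and since $\sqrt{c_m}>\beta(m)$ for all $3\le m\le 8$ this kills the gain entirely. So as written your summation step does not close.

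What the paper does instead is a second, independent idea. First it proves an indegree bound $d^-(v)\le a_m r$ with $a_m=(1-\beta)^{m-1}/\beta$, by running the $S_k$ construction around a single vertex (Lemma~\ref{lemma3}). Then it double-counts: $\sum_{(u,v)} f(u,v)=\sum_{xy\in\overline E}|E(D[N^-(x)\cap N^-(y)])|$ and $\sum_v f(v)=\sum_{xy\in\overline E}|N^-(x)\cap N^-(y)|$, and for each missing pair $xy$ it bounds the number of edges in the $m$-free digraph $D[N^-(x)\cap N^-(y)]$ by applying the feedback--arc--set estimate \emph{again}, together with the cap $|N^-(x)\cap N^-(y)|\le a_m r$. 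This yields $\sum_{(u,v)} f(u,v)<b_m r\sum_v f(v)$ for an explicit constant $b_m$ (Lemma~\ref{lemma4}); Cauchy--Schwarz then gives $\sum_{(u,v)}\sqrt{f(u,v)}<nr^2\sqrt{b_m(1/2-\tau)}$. Finally, the paper plays the resulting inequality off against the original \eqref{tauineq1}: the latter forces $\tau$ above a threshold $\tau_m^*$, while the former fails on $(\tau_m^*,1/2)$ for the stated $\beta(m)$. The indegree bound and this second application of Fact~\ref{lemma1} to the common \emph{in}-neighbourhoods are the missing ingredients in your plan; without them the specific constants $\beta(m)$ in the statement are not attainable.
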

While we prove the above result only for $3\le m\le 8$, similar estimates can be obtained for any given value of $m$. The reason we restrict ourselves to these values of $m$ is three-fold. First, if one compares the bounds in \eqref{estimates} to those in \eqref{betterbounds}, the improvements get progressively smaller. Second, the general statement of Theorem \ref{mainthm2} is rather awkward. Third, for values of $m\ge 14$, Shen's bound \cite{shen3} is stronger than the one we would obtain with the current method.

\section{\bf Proof of Theorem \ref{mainthm2}}

As with Theorem \ref{mainthm1}, we prove Theorem \ref{mainthm2} by induction on $n$. It is easily seen that the statement is valid for $n=3$. Let us now suppose that the theorem  holds for all digraphs with fewer that $n$ vertices, and let $D=(V,E)$ be a counterexample with $n$ vertices. Thus, $D$ is a directed graph with minimum outdegree at least $\lceil \beta(m)\cdot n\rceil$ and $D$ contains no directed cycles of length at most $m$.
To improve readability and maintain consistency set $\alpha:=\beta(m)$. Without loss of generality, we can assume that $D$ is $r$-outregular, where $r= \lceil \alpha n\rceil$, that is, every vertex is of outdegree $r$ in $D$. We intend to reach a contradiction.

\begin{lemma}\label{lemma3}
Let $D$ be a minimal counterexample $r$-outregular, $m$-free digraph. Then, for every $v\in V(D)$
\begin{equation}
d^-(v) \leq \frac{(1 - \alpha)^{m - 1}}{\alpha}r
\end{equation}
\end{lemma}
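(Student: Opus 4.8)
The plan is to bound the indegree of an arbitrary vertex $v$ by iteratively applying the minimality of $D$ to find low-outdegree vertices inside a growing acyclic "reach" set, exactly in the spirit of Claim \ref{claimtuv0} but now seeded at $N^-(v)$ rather than $N^+(v)$. First I would observe that since $D$ is $m$-free, the sets reachable from $N^-(v)$ by short directed paths cannot meet $N^-(v)$ itself without creating a short cycle: a vertex at directed distance $\le k$ from some $x \in N^-(v)$, if it lay in $N^-(v)$, would close a cycle through $v$ of length $\le k+2$. So I will build subsets $T_1, T_2, \dots$ of $V(D)$, where $T_0 := N^-(v)$ and each $T_k$ consists of the outneighbors of a cleverly chosen vertex $w_k \in T_0 \sqcup T_1 \sqcup \cdots \sqcup T_{k-1}$ that are not already in that union, with all the $T_k$ mutually disjoint and also disjoint from $N^+(v)$ and from $\{v\}$.

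The key recursive step: let $G_k$ be the subdigraph of $D$ induced by $T_0 \sqcup \cdots \sqcup T_{k-1}$. Since this set omits $v$ (no vertex of $N^-(v)$ is $v$, and the later $T_i$ avoid $v$ because a path of length $\le i+1$ from $N^-(v)$ to $v$ would give a cycle of length $\le i+3 \le m$), the subgraph $G_k$ has fewer than $n$ vertices, so by the induction hypothesis of Theorem \ref{mainthm2} it has a vertex $w_k$ of outdegree at most $\alpha |V(G_k)| = \alpha(d^-(v) + |T_1| + \cdots + |T_{k-1}|)$ within $G_k$. Since $w_k$ has outdegree exactly $r$ in $D$, the new layer satisfies
\[
|T_k| \ge r - \alpha\bigl(d^-(v) + |T_1| + \cdots + |T_{k-1}|\bigr),
\]
which, written additively for the running sum $\Sigma_k := d^-(v) + |T_1| + \cdots + |T_k|$, gives $\Sigma_k \ge r + (1-\alpha)\Sigma_{k-1}$. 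I would also need to check the disjointness from $N^+(v)$: if a vertex of $T_k$ landed in $N^+(v)$ we would get a short cycle back to $v$ (or, if $w_k \in T_0 = N^-(v)$, note $T_0$ and $N^+(v)$ are already disjoint since a common vertex gives a $2$-cycle). Running the recursion far enough — distance $\le m-2$ from $v$, i.e. $k$ up to $m-2$ — keeps every layer disjoint from $N^-(v) \cup N^+(v)$ and from each other, so $N^+(v)$, $N^-(v)$, and all the $T_k$ fit inside $V(D)$; combined with the recursion $\Sigma_k \ge r + (1-\alpha)\Sigma_{k-1}$ started from $\Sigma_0 = d^-(v)$, unrolling gives
\[
n \ge d^-(v) + \sum_{k=1}^{m-2}|T_k| + |N^+(v)| \ge (1-\alpha)^{m-2} d^-(v) + r\cdot\frac{1-(1-\alpha)^{m-2}}{\alpha} + r,
\]
and then $n \le r/\alpha$ forces $(1-\alpha)^{m-2} d^-(v) \le r/\alpha - r - r(1-(1-\alpha)^{m-2})/\alpha = r(1-\alpha)^{m-2}/\alpha \cdot$ (after simplification) $ \cdot (1-\alpha)$, i.e. $d^-(v) \le \frac{(1-\alpha)^{m-1}}{\alpha} r$ after dividing by $(1-\alpha)^{m-2}$.

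The main obstacle I expect is the bookkeeping on how many layers can be grown while preserving disjointness from $N^-(v)$: each extra layer increases the distance bound from $v$ by one, and one must verify that the layer indexed $k$ stays at distance $\le k+1$ from $v$ — which is what rules out intersections with $N^-(v)$ (a cycle of length $\le k+3$) and with $N^+(v)$ (a cycle of length $\le k+2$) — so the recursion can legitimately run for $k = 1, \dots, m-2$ but no further. A secondary point to get right is the arithmetic that turns the geometric-sum bound together with $n \le r/\alpha$ into exactly the exponent $m-1$; this is routine but must be done carefully since an off-by-one in the number of layers changes the exponent.
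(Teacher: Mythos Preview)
Your approach seeds the layer construction at $N^-(v)$ rather than at $N^+(v)$, and this reversal is fatal for the disjointness you need. Take $k=1$: you pick $w_1\in T_0=N^-(v)$ and let $T_1$ be the out\-neighbours of $w_1$ outside $N^-(v)$. If some $y\in T_1$ also lies in $N^+(v)$, the edges present are $(w_1,v)$, $(w_1,y)$, and $(v,y)$ --- a transitive triangle, not a cycle. More generally, a directed path from $x\in N^-(v)$ to a vertex $y\in N^+(v)$, together with the edges $(x,v)$ and $(v,y)$, never closes a directed cycle: both extra edges point \emph{toward} $v$ and $y$, not back to $x$. So your claim that ``a vertex of $T_k$ landing in $N^+(v)$ gives a short cycle back to $v$'' is false, and without $T_k\cap N^+(v)=\emptyset$ you cannot add the $|N^+(v)|=r$ term in the final count. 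Dropping that term, the recursion $\Sigma_k\ge r+(1-\alpha)\Sigma_{k-1}$ from $\Sigma_0=d^-(v)$ together with $n\le r/\alpha$ collapses to the trivial $d^-(v)\le r/\alpha$. (A smaller but related issue: since $(w_1,v)\in E$ and $v\notin N^-(v)$, the vertex $v$ itself lands in $T_1$ as you defined it; your justification that ``a path from $N^-(v)$ to $v$ gives a cycle'' fails for the same directional reason, as does your opening observation about paths between two vertices of $N^-(v)$.)

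The paper's proof avoids this by growing forward from $N^+(v)$: it builds layers $S_1,\dots,S_{m-2}$ with $|S_1|+\cdots+|S_k|\ge \sum_{j=1}^{k}(1-\alpha)^j r$, each $S_k$ lying at directed distance at most $k+1$ \emph{from} $v$. Then $S_k\cap N^-(v)\neq\emptyset$ would genuinely create a cycle of length $\le k+2\le m$, so $\{v\}$, $N^+(v)$, $N^-(v)$, $S_1,\dots,S_{m-2}$ are pairwise disjoint and
\[
\frac{r}{\alpha}\ge n\ge 1+r+d^-(v)+\sum_{j=1}^{m-2}(1-\alpha)^j r=1+d^-(v)+\frac{1-(1-\alpha)^{m-1}}{\alpha}\,r,
\]
giving $d^-(v)<\frac{(1-\alpha)^{m-1}}{\alpha}r$. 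The asymmetry between $N^+(v)$ and $N^-(v)$ is essential; switching your seed to $N^+(v)$ repairs the argument and turns it into exactly the paper's proof.
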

\begin{proof}
We will construct $m-2$ subsets of $V(D)$, denoted $S_1, S_2,\ldots, S_{m-2}$, with the following properties. For every $1\le k \le m-2$
\begin{align*}
&(a)\,|S_1|+|S_2|+\cdots |S_k|\ge (1-\alpha)r+(1-\alpha)^2r+\cdots +(1-\alpha)^k r.\\
&(b)\, \text{The distance from vertex}\,\, v\,\,\text{to any vertex of}\,\,S_k\,\,\text{is at most}\,\, k+1.\\
&(c)\, \text{The sets}\,\,\{v\}, N^+(v), N^-(v), S_1, S_2, \ldots, S_k\,\,\text{are mutually disjoint}.
\end{align*}
We proceed by induction. Let $G_1$ be the subdigraph induced by $N^+(v)$. By the minimality of $D$, there exists a vertex $w_1\in N^+(v)$ whose outdegree in $G_1$ is no greater that $\alpha|N^+(v)|=\alpha r$. Let $S_1$ be the set of outneighbors of $w_1$ not in $N^+(v)$.
Then clearly, $|S_1|\ge (1-\alpha) r$, and the distance between $v$ and any vertex of $S_1$ is no greater than $2$. Also, note that $S_1$ cannot have vertices in common with any of the sets $\{v\}, N^+(v), N^-(v)$, otherwise a directed cycle of length at most $3$ would occur. This proves the base case $k=1$.

Next, suppose that for some $2\le k \le m-2$ we have found the sets $S_1, S_2, \ldots S_{k-1}$ with the properties stated above

Let $G_{k}$ be the subdigraph of $D$ induced by $N^+(v) \sqcup S_1 \sqcup S_2\cdots \sqcup S_{k-1}$. Note that $v$ is not a vertex of this subgraph. By the minimality of $D$, there exists a vertex $w_{k} \in N^+(v) \sqcup S_1 \sqcup S_2\cdots \sqcup S_{k-1}$  whose outdegree within $G_k$ is no greater than $\alpha(|N^+(v)|+|S_1|+|S_2|+\cdots+ |S_{k-1}|)$. On the other hand, $w_k$ is a vertex of $D$, so $d^+_D(w_k) = r = \lceil \alpha n \rceil$.

Let $S_{k} $ be the set of all outneighbors of $w_k$ which are not in $N^+(v) \sqcup S_1 \sqcup S_2\cdots \sqcup S_{k-1}$. Then,
\begin{align*}
&|S_k|\ge r-\alpha(|N^+(v)|+|S_1|+|S_2|+\cdots+ |S_{k-1}|)=r-\alpha r- \alpha(|S_1|+|S_2|+\cdots |S_{k-1}|)\implies\\
&|S_1|+|S_2|+\cdots |S_k|\ge (1-\alpha)r+(1-\alpha)(|S_1|+|S_2|+\cdots |S_{k-1}|)\implies\\
&|S_1|+|S_2|+\cdots |S_k|\ge (1-\alpha)r+(1-\alpha)^2r+\cdots (1-\alpha)^{k-1} r+ (1-\alpha)^k r
\end{align*}
and this proves the inductive step for statement $(a)$.

Since $w_k\in N^+(v) \sqcup S_1 \sqcup S_2 \sqcup \cdots \sqcup S_{k-1}$, the distance between $v$ and $w_k$ is no greater than one of the values $1, 2, \ldots k$, depending on whether $w_k\in N^+(v)$, or $w_k\in S_i$, for some $1\le i \le k-1$. Since every vertex of  $S_k$ is an outneighbor of $w_k$, the inductive step for part $(b)$ of the claim follows.

Finally, for proving part $(c)$, it suffices to show that $S_k$ is disjoint from both $\{v\}$ and $N^-(v)$.
 If $S_k\cap \{v\}\ne \emptyset$ then one would obtain a directed cycle of length at most $k+1$. If $S_k\cap N^-(v)\ne \emptyset$ then one would obtain a directed cycle of length at most $k+2$. In both cases this contradicts the assumption that $D$ contains no directed cycles of length at most $m$. This completes the proof of the inductive step.

 It follows that
 \begin{align*}
 &n\ge 1+|N^+(v)|+|N^-(v)|+|S_1|+|S_2|+\cdots |S_{m-2}|\implies\\
 &\frac{r}{\alpha}\ge n \ge 1+r+d^-(v)+(1-\alpha)r+(1-\alpha)^2 r+\cdots+ (1-\alpha)^m-2 r \implies\\
 &d^-(v)\le \frac{r}{\alpha}-\frac{1-(1-\alpha)^{m-1}}{\alpha}r-1<\frac{(1-\alpha)^{m-1}}{\alpha} r.
 \end{align*}
\end{proof}
We next introduce some new quantities. For every $3\le m\le 8$ let
\begin{equation}\label{AB}
a_m=\frac{(1-\beta(m))^{m-1}}{\beta(m)}=\frac{(1-\alpha)^{m-1}}{\alpha}, \,\, \text{and}\,\, b_m=\frac{a^2_mc_m+2a_m-1}{2a_m(1+c_m)},
\end{equation}
where the values of $c_m$ are defined in \eqref{C}.

Computing $a_m$ and $b_m$ numerically we obtain
\begin{align*}
&a_3\approx1.18614,\, a_4\approx1.26411,\, a_5\approx1.30809,\, a_6\approx1.33396,\, a_7\approx1.35545,\, a_8\approx1.37055,\\
&b_3\approx0.58522,\,\, b_4\approx0.61209,\, b_5\approx0.62543,\,\, b_6\approx0.63353,\,\, b_7\approx0.63888,\,\, b_8\approx0.64234.
\end{align*}
For each $(u, v)\in E(D)$, let $f(u, v)$ be the number of missing edges in $N^+(u) \cap N^+(v)$. Similarly, for each $v \in V(D)$, let $f(v) = {r\choose 2}- |E(D(N^+(v)))|= $ the number of missing edges in $N^+(v)$, and $t(v) = |E(D(N^+(v)))|=$ the number of transitive triangles in $D$ with source vertex $v$. By definition, $t(v) + f(v) = {r \choose 2}$ for each $v \in V(D)$, and $T = \sum_{v \in V(D)} t(v)=nr^2\tau$, the number of transitive triangles in $D$. It follows that
\begin{equation}\label{f}
\sum_{v\in V(D)} f(v)=n{r\choose 2}- T<nr^2/2- nr^2\tau= nr^2(1/2-\tau).
\end{equation}
\begin{lemma}\label{lemma4}
With the above notations we have
\begin{equation*}
\sum_{(u, v) \in E(D)}f(u, v) < b_m r\sum_{v \in V(D)}f(v).
\end{equation*}
\end{lemma}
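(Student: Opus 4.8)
The plan is to evaluate both sides after reorganising the sum over arcs into a sum over the missing (non-adjacent) pairs of $D$. For a missing pair $\{x,y\}$ put $W_{xy}:=N^-(x)\cap N^-(y)$. An ordered pair $(u,v)$ is an arc of $D$ with $x,y\in N^+(u)\cap N^+(v)$ exactly when $(u,v)$ is an arc of $D$ with both endpoints in $W_{xy}$; summing the contribution of each missing pair therefore gives
\[
\sum_{(u,v)\in E(D)} f(u,v)\;=\;\sum_{\{x,y\}\ \text{missing}} e\bigl(D[W_{xy}]\bigr),
\]
where $e(D[S])$ denotes the number of arcs induced on $S$. The dual count, recording for each vertex $w$ which missing pairs lie inside $N^+(w)$, yields the companion identity
\[
\sum_{\{x,y\}\ \text{missing}} |W_{xy}|\;=\;\sum_{w\in V(D)} f(w).
\]
So it is enough to establish the pointwise estimate $e(D[W_{xy}])\le b_m\,r\,|W_{xy}|$ for every missing pair (with strict inequality when $W_{xy}\neq\emptyset$) and then sum it against the second identity.

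For the pointwise estimate, fix a missing pair, set $s:=|W_{xy}|$, and let $\mu$ be the number of missing pairs inside $W_{xy}$, so $e(D[W_{xy}])=\binom s2-\mu$. Two inputs enter. First, Lemma~\ref{lemma3} gives $s\le\min(d^-(x),d^-(y))\le a_m r$. Second, $D[W_{xy}]$ is $m$-free, so Fact~\ref{lemma1} provides an acyclic subdigraph $G'$ of $D[W_{xy}]$ with at least $e(D[W_{xy}])-c_m\mu$ arcs; since $D$ is $r$-outregular, every out-degree in $G'$ is at most $r$, and an acyclic digraph on $s$ vertices with all out-degrees at most $r$ has at most $(s-r)r+\binom r2$ arcs once $s\ge r$ (order it topologically: its $i$-th vertex has out-degree at most $\min(s-i,r)$), and at most $\binom s2$ arcs when $s\le r$. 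Combining and clearing $\mu$ gives, for $r\le s\le a_m r$,
\[
e(D[W_{xy}])\;\le\;\frac{(s-r)r+\binom r2+c_m\binom s2}{1+c_m}.
\]
Dividing the right-hand side by $s$ produces a function of $s$ with manifestly positive derivative, hence largest at $s=a_m r$, where — using $\binom r2=\frac{r^2}{2}-\frac{r}{2}$ — it equals $b_m r-\frac{1}{1+c_m}\bigl(\frac{1}{2a_m}+\frac{c_m}{2}\bigr)<b_m r$; this is exactly the point at which the defining identity $2a_m(1+c_m)b_m=a_m^2c_m+2a_m-1$ of \eqref{AB} is used. For $s\le r$ one simply has $e(D[W_{xy}])\le\binom s2<\frac{1}{2}rs<b_m r\,s$ because $b_m>\frac{1}{2}$. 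Summing over all missing pairs and applying the companion identity yields the lemma, strictly as soon as some out-neighbourhood of $D$ contains a missing pair.

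The step that requires care is obtaining the pointwise estimate with the correct linear slope. The naive bound $e(D[W_{xy}])\le\binom s2\le\binom{a_m r}{2}$ is worthless after summation, because the number of missing pairs with $W_{xy}\neq\emptyset$ can itself be of order $\sum_w f(w)$; and even the honest inequality $e(D[W_{xy}])<\frac{1}{2}s\cdot a_m r$ only yields slope $\frac{1}{2}a_m r$, which exceeds $b_m r$. What saves the day is the interaction of $r$-outregularity (capping out-degrees in the acyclic subdigraph by $r$, which makes the topological bound grow only linearly in $s$ past $s=r$) with the feedback-arc-set saving $c_m\mu$ of Fact~\ref{lemma1}: together these push the slope down to $b_m r$, and the worst case turns out to be exactly the regime $s=a_m r$ in which Lemma~\ref{lemma3} is tight, which is why $a_m$ and $c_m$ combine in $b_m$ precisely as in \eqref{AB}.
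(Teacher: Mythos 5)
Your proposal is correct and follows essentially the same route as the paper: the same double count over missing pairs $\{x,y\}$ reducing to a pointwise bound on $e(D[N^-(x)\cap N^-(y)])$, the same combination of Lemma~\ref{lemma3} with Fact~\ref{lemma1} and the topological bound on acyclic digraphs of bounded outdegree, yielding an upper bound algebraically identical to the paper's $\binom{q}{2}-\frac{1}{1+c_m}\binom{q-r}{2}$. The only cosmetic difference is that you finish by a monotonicity-in-$s$ argument evaluated at $s=a_mr$, whereas the paper argues by contradiction via a factored polynomial inequality; these are the same computation.
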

\begin{proof}
Let $\overline{E}(D)$ denote the set of missing edges from $D$, that is, the pairs $xy \in {V(D) \choose{2}}$ such that neither $(x, y)$ nor $(y, x)$ is an edge in $D$. Note that
\begin{align*}
\sum_{(u, v) \in V(D)}f(u, v) &= \sum_{xy \in \overline{E}(D)}|E(D(N^-(x) \cap N^-(y)))|\,\,\text {and}\\
\sum_{v \in V(D)}f(v) &= \sum_{xy \in \overline{E}(D)}|N^-(x) \cap N^-(y)|.
\end{align*}
Therefore, the statement of the lemma holds if for every $xy \in \overline{E}(D)$
\begin{equation}
|E(D(N^{-}(x) \cap N^{-}(y))) < Br|N^{-}(x) \cap N^{-}(y)|.
\end{equation}
Let $|N^{-}(x) \cap N^{-}(y)| = q$.

If $q \leq r + 1$ then
$|E(D(N^{-}(x) \cap N^{-}(y))) \leq q(q - 1)/2 < qr/2 < b_m r q$, and we are done since $b_m>1/2$ for all $3\le m\le 8$.

Hence, we can assume that $q \geq r + 2$. Denote by $e$ the number of edges missing from $D(N^{-}(x) \cap N^{-}(y))$. Note that any acyclic digraph on $q$ vertices with maximum outdegree $r$ has at most $\binom{r}{2} + r(q - r) = \binom{q}{2} - \binom{q - r}{2}$ edges.

Since $D(N^-(x)\cap N^-(y))$ itself is $m$-free, Lemma \ref{lemma1} implies that it contains an acyclic subdigraph with at least ${q \choose 2} - (1+c_m)e$ edges. Therefore,

\begin{equation*}
{q\choose 2}-(1+c_m)e\le {q\choose 2}-{q-r \choose 2} \implies e\ge \frac{1}{1+c_m}{q-r \choose 2}.
\end{equation*}
It follows that $E(D(N^-(u)\cap N^-(v))|\le {q\choose 2}-\frac{1}{1+c_m}{q-r \choose 2}$.

Suppose for the sake of contradiction that
\begin{align*}
|E(D(N^{-}(x) \cap N^{-}(y))) \ge b_m r|N^{-}(x) \cap N^{-}(y)|\implies\\
{q\choose 2}-\frac{1}{1+c_m}{q-r \choose 2}\ge b_m r q= \frac{c_m a_m^2+2a_m-1}{2a_m(1+c_m)}qr,
\end{align*}
with after clearing the denominators and simplifying is equivalent to
\begin{equation*}
(q-a_mr)(c_ma_mq+r)-a_m(c_mq+r)\ge 0.
\end{equation*}
However, by Lemma \ref{lemma3} we have $q=|N^-(u)\cap N^-(v)|\le |N^-(v)|=d^-(v)\le \frac{(1-\alpha)^{m-1}}{\alpha}r=a_mr$.
This shows that the above inequality is impossible.
\end{proof}
\begin{lemma}\label{lemma5}
With the notations above we have
\begin{equation*}
\sum_{(u,v)\in E(D)}\sqrt{f(u,v)} <nr^2\sqrt{b_m(1/2-\tau)}.
\end{equation*}
\end{lemma}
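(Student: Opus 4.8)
The plan is to obtain Lemma \ref{lemma5} as an essentially immediate consequence of Lemma \ref{lemma4}, the counting identity \eqref{f}, and the Cauchy--Schwarz inequality. First I would record that since $D$ is $r$-outregular on $n$ vertices, $|E(D)| = nr$. Applying Cauchy--Schwarz to the sum $\sum_{(u,v)\in E(D)} 1\cdot\sqrt{f(u,v)}$ gives
\begin{equation*}
\sum_{(u,v)\in E(D)}\sqrt{f(u,v)} \le \sqrt{|E(D)|}\cdot\sqrt{\sum_{(u,v)\in E(D)}f(u,v)} = \sqrt{nr}\cdot\sqrt{\sum_{(u,v)\in E(D)}f(u,v)}.
\end{equation*}

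Next I would feed in the two bounds already available. Lemma \ref{lemma4} gives $\sum_{(u,v)\in E(D)}f(u,v) < b_m r \sum_{v\in V(D)}f(v)$, and \eqref{f} gives $\sum_{v\in V(D)}f(v) < nr^2(1/2-\tau)$; note that $\tau<1/2$ (as in \eqref{weaktau}), so the quantity $1/2-\tau$ is positive and all square roots below make sense. Chaining these,
\begin{equation*}
\sum_{(u,v)\in E(D)}f(u,v) < b_m r\cdot nr^2(1/2-\tau) = b_m\, n r^3 (1/2-\tau).
\end{equation*}
Substituting into the Cauchy--Schwarz estimate yields
\begin{equation*}
\sum_{(u,v)\in E(D)}\sqrt{f(u,v)} < \sqrt{nr}\cdot\sqrt{b_m\,nr^3(1/2-\tau)} = \sqrt{n^2 r^4\, b_m(1/2-\tau)} = nr^2\sqrt{b_m(1/2-\tau)},
\end{equation*}
which is the assertion of the lemma. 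The inequality is strict because the bound supplied by Lemma \ref{lemma4} is strict.

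There is no real obstacle here: the only points requiring a word of care are that $|E(D)|=nr$ uses $r$-outregularity, and that the argument of the outer square root is nonnegative, which is guaranteed by $\tau<1/2$. The substantive work — controlling $\sum f(u,v)$ by the feedback-arc-set estimate of Fact \ref{lemma1} together with the indegree bound of Lemma \ref{lemma3} — has already been done in Lemma \ref{lemma4}, so this step is purely a convexity (Cauchy--Schwarz) repackaging needed to convert the bound on $\sum f(u,v)$ into the bound on $\sum\sqrt{f(u,v)}$ that will be used in the final summation argument.
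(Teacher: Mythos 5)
Your proposal is correct and follows exactly the paper's argument: Cauchy--Schwarz with $|E(D)|=nr$, then Lemma \ref{lemma4} and inequality \eqref{f} to bound $\sum_{(u,v)\in E(D)}f(u,v)$ by $b_m nr^3(1/2-\tau)$. The only cosmetic difference is that the paper keeps everything squared until the end, whereas you take square roots earlier; the content is identical.
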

\begin{proof}
Using the Cauchy-Schwarz inequality, Lemma \ref{lemma4} and inequality \eqref{f} we obtain
\begin{equation*}
\left(\sum_{(u,v)\in E(D)} \sqrt{f(u,v)}\right)^2\le nr\sum_{(u,v)\in E(D)}f(u,v)\le b_mnr^2\sum_{v\in V(D)} f(v)< b_mn^2r^4(1/2-\tau).
\end{equation*}
This proves Lemma \ref{lemma5}.
\end{proof}
\begin{lemma}\label{mainlemma2}
For every edge $(u,v)\in E(D)$ we have that
\begin{equation}\label{mainineq2}
n\ge \frac{1-(1-\alpha)^{m-2}}{\alpha}\,r+d^{-}(v)+q(u,v)+(1-\alpha)^{m-3}\left(t(u,v)-\sqrt{2c_mf(u,v)}\right).
\end{equation}
\end{lemma}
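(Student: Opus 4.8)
The plan is to follow, almost line for line, Case~2 of the proof of Lemma~\ref{mainlemma1}, the only change being that the crude estimate on the outdegree of the first auxiliary vertex is replaced by the much sharper bound supplied by Lemma~\ref{lemma2}. First a reduction: if $t(u,v)=0$ then $N^+(u)\cap N^+(v)=\emptyset$, so $f(u,v)=0$, and the right-hand side of \eqref{mainineq2} coincides with that of \eqref{mainineq1}; thus in this case \eqref{mainineq2} is nothing but Lemma~\ref{mainlemma1}, and we may henceforth assume $t(u,v)\ge 1$.

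Next I would look at the subdigraph $H:=D\bigl(N^+(u)\cap N^+(v)\bigr)$. It has $t(u,v)$ vertices, exactly $f(u,v)$ missing edges, and is $m$-free since $D$ is. By Lemma~\ref{lemma2} there is a vertex $w_1\in N^+(u)\cap N^+(v)$ whose outdegree inside $H$ is at most $\sqrt{2c_m f(u,v)}$ (when $f(u,v)=0$ this is immediate, since $H$ is then a transitive tournament and $w_1$ may be taken to be its sink). Any further outneighbour of $w_1$ lying in $N^+(v)$ must in fact lie in $N^+(v)\setminus N^+(u)$, a set of size $p(u,v)=r-t(u,v)$, so the outdegree of $w_1$ within $D(N^+(v))$ is at most $\sqrt{2c_m f(u,v)}+r-t(u,v)$. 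Letting $S_1$ be the set of outneighbours of $w_1$ not contained in $N^+(v)$, this gives $|S_1|\ge t(u,v)-\sqrt{2c_m f(u,v)}$; moreover every vertex of $S_1$ is at distance $1$ from $w_1$ and hence at distance at most $2$ from each of $u$ and $v$, and $S_1$ meets none of $N^+(v)$, $N^-(v)$, $N^-(u)$, since otherwise a directed triangle would appear.

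From here I would run the finite induction of Claim~\ref{claimtuv1} essentially unchanged: given mutually disjoint sets $S_1,\dots,S_{k-1}$, the minimality of $D$ applied to the subdigraph induced by $N^+(v)\sqcup S_1\sqcup\cdots\sqcup S_{k-1}$ produces a vertex $w_k$ of small outdegree in that subdigraph, and $S_k$ is taken to be the outneighbours of $w_k$ outside this union. The same distance-and-disjointness bookkeeping as in Claim~\ref{claimtuv1} shows that $N^+(v),N^-(v),N^-(u)\setminus N^-(v),S_1,\dots,S_k$ stay mutually disjoint and that every vertex of $S_k$ is within distance $k+1$ of both $u$ and $v$, which is exactly what lets $k$ be pushed to $m-2$ without producing a cycle of length $\le m$. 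The sole difference from Claim~\ref{claimtuv1} is the improved base estimate $|S_1|\ge t(u,v)-\sqrt{2c_m f(u,v)}$ in place of $|S_1|\ge(1-\alpha)t(u,v)$; unwinding the same recurrence $\sum_{i\le k}|S_i|\ge(1-\alpha)r+(1-\alpha)\sum_{i\le k-1}|S_i|$ therefore yields
\begin{equation*}
\sum_{i=1}^{m-2}|S_i|\ \ge\ (1-\alpha)r+(1-\alpha)^2r+\cdots+(1-\alpha)^{m-3}r+(1-\alpha)^{m-3}\bigl(t(u,v)-\sqrt{2c_m f(u,v)}\bigr),
\end{equation*}
which is one power of $(1-\alpha)$ better on the $t(u,v)$ term than in Lemma~\ref{mainlemma1} precisely because the new base estimate carries no factor $(1-\alpha)$. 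Adding $|N^+(v)|=r$, $|N^-(v)|=d^-(v)$ and $|N^-(u)\setminus N^-(v)|=q(u,v)$, and using $1+(1-\alpha)+\cdots+(1-\alpha)^{m-3}=\frac{1-(1-\alpha)^{m-2}}{\alpha}$, gives exactly \eqref{mainineq2}.

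The one place where genuine care is needed — and hence the main obstacle — is verifying that Lemma~\ref{lemma2} is applied legitimately to $H$: one must check that $H$ is $m$-free (clear, as an induced subdigraph of $D$) and handle cleanly the degenerate situations $t(u,v)=1$ and $f(u,v)=0$, where the bound $\sqrt{2c_m f(u,v)}$ equals $0$ and is attained by a sink of $H$. Everything else is a faithful transcription of Case~2 of Lemma~\ref{mainlemma1} with the single substitution described above.
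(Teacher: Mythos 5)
Your proposal is correct and follows essentially the same route as the paper: apply Lemma~\ref{lemma2} to the $m$-free subdigraph induced by $N^+(u)\cap N^+(v)$ to obtain $w_1$ with at most $\sqrt{2c_m f(u,v)}$ outneighbours there, deduce $|S_1|\ge t(u,v)-\sqrt{2c_m f(u,v)}$, and rerun the recursion of Claim~\ref{claimtuv1} with this improved base estimate. Your handling of the degenerate cases $t(u,v)=0$ and $f(u,v)=0$ is a small but welcome addition to what the paper states explicitly.
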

\begin{proof}
Note the slight difference between the statement above and that of Lemma \ref{mainlemma1}.
If $t(u,v)=0$ then there is nothing to prove.
If $t(u,v)>0$, Lemma \ref{lemma2} implies the existence of a vertex $w_1\in N^+(u)\cap N^+(v)$ which has at most $\sqrt{2c_mf(u,v)}$ outneighbors in the subdigraph induced by $N^+(u)\cap N^+(v)$. It follows that $w_1$ has at most $\sqrt{2c_mf(u,v)}+p(u,v)$ outneighbors in $N^+(v)$.

Let $S_1$ be the set of outneighbors of $w_1$ which do not belong to $N^+(v)$. Then
\begin{equation*}
|S_1|\ge r-(\sqrt{2c_mf(u,v)}+p(u,v))=t(u,v)-\sqrt{2c_mf(u,v)}.
\end{equation*}
We recursively construct the sets $S_2, S_3, \ldots, S_{m-2}$, with the following properties: for every $1\le k\le m-2$
\begin{align*}
(a)\, &|S_1|+|S_2|+\cdots |S_k| \ge (1-\alpha)r+(1-\alpha)^2r+\cdots (1-\alpha)^{k-1} r+(1-\alpha)^{k-1}|S_1|\\
(b)\, &\text{The distance between vertex}\,\, w_1\,\, \text{and any vertex of}\,\, S_k\, \text{is at most}\,\, k.\\
(c)\, &\text{The distance between any of the vertices}\,\, u, v\,\, \text{and any vertex of}\,\, S_k\, \text{is at most}\,\, k+1.\\
(d)\, &\text{The sets}\,\, N^+(v), N^{-}(v), N^{-}(u)\setminus N^{-}(v),\,\, S_1, S_2,\ldots S_{k-1}, \,\,\text{and}\,\, S_k\,\,\text{are mutually disjoint.}
\end{align*}
The proof is almost identical to that of Claim \ref{claimtuv1}, as the only difference is the estimate for the size of $S_1$.
\end{proof}
Summing inequalities \eqref{mainineq2} over all edges $(u,v)\in E(D)$, we obtain a similar inequality to the one in Theorem \ref{mainthm1}:
\begin{equation*}
n^2r\ge \frac{1-(1-\alpha)^{m-2}}{\alpha}nr^2+nr^2+nr^2(1-\tau)+(1-\alpha)^{m-3}T-(1-\alpha)^{m-3}\sqrt{2c_m}\sum_{(u,v)\in E(D)}\sqrt{f(u,v)},
\end{equation*}
which after using Lemma \ref{lemma5} and dividing by $nr^2$ gives
\begin{equation*}
\frac{1}{\alpha}\ge \frac{n}{r}\ge \frac{1-(1-\alpha)^{m-2}}{\alpha}+2-\tau+(1-\alpha)^{m-3}-(1-\alpha)^{m-3}\sqrt{b_mc_m(1-2\tau)}.
\end{equation*}
Rearranging, we obtain
\begin{equation}
\tau(1-(1-\alpha)^{m-3})+(1-\alpha)^{m-3}\sqrt{b_mc_m(1-2\tau)}\ge 2-\frac{(1-\alpha)^{m-2}}{\alpha}.
\end{equation}\label{tauineq2}
This inequality complements the earlier inequality \eqref{tauineq1}
\begin{equation}\label{tauineq1bis}
\tau\left(1-(1-\alpha)^{m-2}\right)\ge 2-\frac{(1-\alpha)^{m-2}}{\alpha}.
\end{equation}
Recall that $\tau<1/2$. If $\tau$ is close to $1/2$ then inequality \eqref{tauineq2} is the stronger one. This is the reason why
the result in Theorem \ref{mainthm1} can be slightly improved.

For the choices of $\alpha=\beta(m)$ given in Theorem \ref{mainthm2}, inequality \eqref{tauineq1bis} implies that
$\tau>\tau_m^{*}$ where $\tau_3^*=0.4726, \tau_4^*=0.4625, \tau_5^*=0.4615, \tau_6^*=0.4673, \tau_7^*=0.4669$, and $\tau_8^*=0.4688$.

However, it is straightforward (albeit tedious) to check that inequality \eqref{tauineq2} is not satisfied by any $\tau \in (\tau_m^*, 1/2)$.
We thus reached the desired contradiction. The proof of Theorem \ref{mainthm2} is now complete.

\end{document}